\newtheorem{theorem}{Theorem}[section]
\newtheorem{prop}[theorem]{Proposition}
\newtheorem{lemma}[theorem]{Lemma}
\newtheorem{corollary}[theorem]{Corollary}
\newtheorem{open}[theorem]{Open question}
\newtheorem*{problem*}{Problem}
\newtheorem{hypo}[theorem]{IFS conditions}
\newtheorem*{hypo*}{IFS conditions}
\newtheorem*{moran*}{Moran's Theorem (1946)}
\newtheorem*{matti*}{Mattila's Counterexample}
\newtheorem*{schief*}{Schief's Counterexample}
\theoremstyle{definition}
\newtheorem{definition}[theorem]{Definition}
\theoremstyle{remark}
\newtheorem{remark}[theorem]{Remark}
\numberwithin{equation}{section}
\newcommand{\R}{\mathbb R}
\newcommand{\N}{\mathbb N}
\newcommand{\TT}{{\mathbb T}^{2}}
\newcommand{\SSS}{{\mathbb S}}
\def \ord{{\rm Ord \,}}
\def \inf{{\rm inf}}
\def \dim{{\rm dim \,}}
\def \dm{{\rm diam \,}}
\def \cd{{\rm Card \,}}
\def \SOSC{{\rm SOSC}}
\def \OSC{{\rm OSC}}
\def \WOSC{{\rm WSC}}
\def \WSP{{\rm LSP}}
\def \LSP{{\rm LSP}}
\def \LSPuno{{\rm LSP1}}
\def \LSPdos{{\rm LSP2}}
\def \IFS{{\rm IFS}}
\def \EIFS{{\rm EIFS}}
\def \K{{\mathcal{K}}}
\newcommand{\sqs}{\sqsubseteq}
\newcommand \F{\mathcal{F}}
\newcommand \St{{\rm St}}
\newcommand \Fr{{\rm Fr}}
\newcommand \ef{\mathbf{\Gamma}}
\newcommand \dih{\dim_{\rm H}}
\newcommand \tres{\dim_{\ef}^{3}}
\newcommand \cuatro{\dim_{\ef}^{4}}
\newcommand \h{\dim_{\rm H}}
\newcommand \eps{\varepsilon}
\newcommand \deltafn{\dm(F,\Gamma_n)}
\newcommand \inte{\accentset{\circ}}
\newcommand \anf{\mathcal{A}_n(F)}
\newcommand \ankf{\mathcal{A}_{n,k}(F)}
\newcommand \hks{\mathcal{H}_{k}^s(F)}
\begin{document}

\title[Irreducible fractal structures for Moran's type theorems]
{Irreducible fractal structures for Moran's type theorems}

\author[M.A. S\'anchez-Granero and M. Fern\'andez-Mart\'{\i}nez]{M.A. S\'anchez-Granero$^1$ and M. Fern\'andez-Mart\'{\i}nez$^{2}$}

\address{$^{1}$ Departamento de Matem\'aticas, Universidad de Almer\'{\i}a, 04120 La Ca\~nada de San Urbano, Almer\'{\i}a (SPAIN)}

\email{misanche@ual.es}

\address{$^{2}$ University Centre of Defence at the Spanish Air Force Academy, MDE-UPCT, 30720 Santiago de la Ribera, Murcia (SPAIN)}
\email{fmm124@gmail.com}



\thanks{2010 Mathematics Subject Classification: 28A80
\newline \indent 
Both authors are partially supported by grant No.~MTM2015-64373-P (MINE\-CO/FEDER, UE).
The second author also acknowledges the partial support of grant No.~MTM2014-51891-P from Spanish Ministry of Economy and Competitiveness and grant No.~19219/PI/14 from Fundaci\'on S\'eneca of Regi\'on de Murcia. 
}

\keywords{}

\begin{abstract}
In this paper, we characterize a novel separation property for IFS-attractors on complete metric spaces. Such a separation property is weaker than the strong open set condition ($\SOSC$) and becomes necessary to reach the equality between the similarity and the Hausdorff dimensions of strict self-similar sets. 
We also investigate the size of the overlaps from the viewpoint of that separation property. 
In addition, we contribute some equivalent conditions to reach the equality between the similarity dimension and a new Hausdorff type dimension for IFS-attractors introduced by the authors in terms of finite coverings.
\end{abstract}

\maketitle

\section{Introduction}

A classical problem in Fractal Geometry consists in determining under what conditions on the pieces of a strict self-similar set $\K$, the equality between the similarity and the Hausdorff dimensions of $\K$ holds. A classical result contributed by Australian mathematician P.A.P. Moran in the forties (c.f.~\cite[Theorem III]{Moran1946}) states that under the open set condition ($\OSC$), which is a property required to the pieces of $\K$ to guarantee that their overlaps are thin enough, the desired equality stands. Afterwards, Lalley introduced the strong open set condition ($\SOSC$) by further requiring that the (feasible) open set provided by the $\OSC$ must intersect the attractor $\K$. It is worth pointing out that the next chain of implications and equivalences stands in the case of Euclidean self-similar sets and is best possible (c.f.~\cite{Schief1994}):
\begin{equation}\label{eq:schief94}
\SOSC\Leftrightarrow \OSC\Leftrightarrow \mathcal{H}_{\rm H}^{\alpha}(\K)>0\Rightarrow \h(\K)=\alpha,	
\end{equation} 
where $\mathcal{H}_{\rm H}^{\alpha}$ is the $\alpha-$dimensional Hausdorff measure, $\h$ denotes the Hausdorff dimension, and $\alpha$ is the similarity dimension of the attractor. Interestingly, Schief proved that $\mathcal{H}_{\rm H}^{\alpha}(\K)>0\Rightarrow \SOSC$ (c.f.~\cite[Theorem 2.1]{Schief1994}) which implies that both the $\SOSC$ and the $\OSC$ are equivalent for Euclidean IFS-attractors.
A counterexample due to Mattila (c.f.~Section \ref{sec:4}) guarantees that the last implication in Eq.~(\ref{eq:schief94}) does not hold, in general. Accordingly, the $\OSC$ becomes only sufficient to reach the equality between those dimensions. A further extension of the problem above takes place in the more general context of attractors on complete metric spaces. Schief also explored such a problem and justified the following chain of implications (c.f.~\cite{Schief1996}):
\begin{equation}\label{eq:schief96}
\mathcal{H}_{\rm H}^{\alpha}(\K)>0\Rightarrow \SOSC\Rightarrow \h(\K)=\alpha,	
\end{equation} 
i.e., the $\SOSC$ is necessary for $\mathcal{H}_{\rm H}^{\alpha}(\K)>0$ and only sufficient for $\h(\K)=\alpha$. Once again, the above-mentioned result of Mattila implies that Eq.~(\ref{eq:schief96}) is best possible. From both Eqs.~(\ref{eq:schief94}) and (\ref{eq:schief96}), it holds that the $\SOSC$ is a sufficent condition on the pre-fractals of $\K$ leading to $\h(\K)=\alpha$. 

In this paper, we make use of the concept of a fractal structure (first contributed in \cite{bandt1992}) to explore and characterize a novel separation property in both contexts: Euclidean attractors and self-similar sets in complete metric spaces. Such a separation property, weaker than the $\OSC$, becomes necessary to reach the equality between the similarity dimension of the attractor and its Hausdorff dimension. Accordingly, we shall conclude (in the general case) that 
\begin{equation*}
\mathcal{H}_{\mathrm H}^{\alpha}(\K)>0\Rightarrow \SOSC\Rightarrow \h(\K)=\alpha\Rightarrow
\WOSC,	
\end{equation*}
where $\WOSC$ refers to the weak separation condition for attractors we shall introduce in upcoming Section~\ref{sec:wsc}.

Moreover, we will prove that the WSC holds if and only if $\cuatro(\K)=\alpha$, where $\cuatro$ is fractal dimension IV introduced in \cite{DIM4} (c.f.~Section~\ref{sec:3}).

\section{Preliminaries}

\subsection{General notation}

Along the sequel, we shall use the following notation from the domain of words \cite{Bandt2005,MR0426486,MR1467773}.
Let $\Sigma=\{1,\ldots,k\}$ be a finite (nonempty) set (also called an alphabet). 
For each natural number $n$, let $\Sigma^n=\{\mathbf{i}=i_1\cdots\ i_n: i_j\in \Sigma, j=1,\ldots, n\}$ be the set consisting of all the words of length $n$ from $\Sigma$. In addition, let $\Sigma^{\infty}$ denote the collection of either all finite $(\cup_{n\in \N} \Sigma^n)$ or infinite $(\Sigma^{\N})$ words from $\Sigma$, i.e., $\Sigma^{\infty}=\cup_{n\in \N}\Sigma^n \cup \Sigma^{\N}$. Thus, the prefix order $\sqs$ is defined on $\Sigma^{\infty}$ as follows: $x\sqs y$, if and only if, $x$ is a prefix of $y$.  

\subsection{$\IFS-$attractors}

Let $k\geq 2$. By an iterated function system ($\IFS$), we shall understand a finite collection of similitudes on a complete metric space $(X,\rho)$, say $\F=\{f_1,\ldots,f_k\}$, where each self-map $f_i:X\to X$ satisfies the following identity: 
\[\rho(f_i(x),f_i(y))=c_i\cdot \rho(x,y),\text{ for all }x,y\in X, \] 
with $c_i\in (0,1)$ being the similarity ratio associated with $f_i$.
In particular, if $X=\R^d$, then $\F$ is said to be an Euclidean $\IFS$ ($\EIFS$, hereafter). 
Under the previous assumptions, there always exists a unique (nonempty) compact subset $\K\subseteq X$ such that
\begin{equation}\label{eq:1}
\K=\cup \{f_i(\K):i=1,\ldots,k\}.	
\end{equation}
The previous equality is usually known as Hutchinson's equation \cite{HUT81} and $\K$ is said to be the $\IFS-$attractor (also the self-similar set) generated by $\F$. It is worth pointing out that $\K$ consists of smaller self-similar copies of itself, $\K_i$, named as pre-fractals of $\K$. Thus, $\K_i=f_i(\K)$ for all $i=1,\ldots,k$. In addition, we shall write $\K_{ij}=f_i(f_j(\K))$, and so on. 
Hence, if $f_{\mathbf{i}}=f_{i_{1}}\circ \cdots \circ f_{i_{n}}, c_{\mathbf{i}}=c_{i_{1}}\cdots\ c_{i_{n}}$, and $\K_{\mathbf{i}}=f_{\mathbf{i}}(\K)$, then Eq.~(\ref{eq:1}) can be rewritten in the following terms:
\[\K=\cup\{\K_{\mathbf{i}}:\mathbf{i}\in \Sigma^n\}.\]
In addition, the address map $\pi:\Sigma^{\infty}\to \K$ stands as a continuous map from the collection $\Sigma^{\infty}$ of all the words of infinite length (sequences) onto the $\IFS-$attractor~$\K$. It is worth pointing out that if the similarity ratios $c_i$ are \emph{small}, then the pre-fractals $\K_i$ are disjoint, $\pi$ is a homemomorphism, and $\K$ becomes a Cantor set. 



\subsection{The open set condition}
In the Euclidean case, there are, at least, three equivalent descriptions regarding the open set condition (OSC in the sequel), which controls the overlaps among the pre-fractals of $\K$.

\begin{enumerate}[(i)]
\item \textbf{The Moran's open set condition} (due to P.A.P. Moran, c.f.~\cite{Moran1946}). We say that $\F=\{f_1,\ldots,f_k\}$ (or its attractor $\K$) is under the OSC if there exists a nonempty open subset $\mathcal{V}\subseteq \R^d$ such that the images $f_i(\mathcal{V})$ are pairwise disjoint with all of them being contained in $\mathcal{V}$, which is called 
a \emph{feasible open set} of $\F$ 
(resp., of $\K$).  
\item \textbf{The finite clustering property} (contributed by Schief, c.f.~\cite{Schief1994}). There exists an integer $N$ such that at most $N$ incomparable pieces $\K_{\mathbf{j}}$ of size $\geq \eps$ can intersect the $\eps-$neighborhood of a piece $\K_{\mathbf{i}}$ of diameter equal to $\eps$. It has to be mentioned here that two pieces of $\K$, $\K_{\mathbf{j}}$ and $\K_{\mathbf{k}}$, are said to be incomparable if 
$\mathbf{j}\not\sqsubseteq \mathbf{k}$ and
$\mathbf{k}\not\sqsubseteq \mathbf{j}$.

\item \textbf{Positive $\alpha-$dimensional Hausdorff measure} (c.f.~\cite{Moran1946,Schief1994}): $\mathcal{H}_{\rm H}^{\alpha}(\K)>0$, where $\alpha$ is the similarity dimension of $\K$, i.e., the (unique) solution of the equation $\sum_{i=1}^kc_i^{\alpha}=1$ (c.f.~Definition \ref{def:sdim}).

\end{enumerate}
Lalley strengthened the $\OSC$ since the feasible open set $\mathcal{V}$ and the attractor $\K$ may be disjoint. Thus, the classical $\OSC$ may be too weak in order to obtain results regarding the fractal dimension of $\K$. In this way,
the strong open set condition ($\SOSC$) stands, if and only if, it holds, in addition to the $\OSC$, that $\K\cap \mathcal{V}\neq \emptyset$ (c.f.~\cite{Lalley1988}). Schief proved that both the $\OSC$ and the $\SOSC$ are equivalent on Euclidean spaces (c.f.~\cite[Theorem 2.2]{Schief1994}). Such a result has been further extended to conformal IFSs \cite{Peres2001}, and self-conformal random fractals \cite{Patzschke2003}, as well.

Finally, we should mention here that Schief has already explored some conditions to guarantee the equality between the similarity dimension (c.f.~Definition \ref{def:sdim}) and the Hausdorff dimension of $\IFS-$attractors on complete metric spaces. In this case, though, the $\OSC$ no longer leads to the equality between such fractal dimensions (c.f.~\cite{Schief1996}).

\subsection{Fractal structures}

Fractal structures were first sketched by Bandt and Retta in \cite{bandt1992} and introduced and applied afterwards by Arenas and S\'anchez-Granero in \cite{SG99A} to characterize non-Archimedean quasi-metrization. 
By a covering of a nonempty set $X$, we shall understand a family $\Gamma$ of subsets such that $X=\cup\{A:A\in \Gamma\}$. Let $\Gamma_1$ and $\Gamma_2$ be two coverings of $X$. The notation $\Gamma_2\prec \Gamma_1$ means that $\Gamma_2$ is a \emph{refinement} of $\Gamma_1$, namely, for all $A\in \Gamma_2$, there exists $B\in \Gamma_1$ such that $A\subseteq B$. Moreover, $\Gamma_2\prec\prec \Gamma_1$ denotes that $\Gamma_2\prec \Gamma_1$, and additionally, for all $B\in \Gamma_1$, it holds that $B=\cup\{A\in \Gamma_2:A\subseteq B\}$. Thus, a fractal structure on $X$ is a countable family of coverings $\ef=\{\Gamma_n\}_{n\in \N}$ such that $\Gamma_{n+1}\prec\prec \Gamma_n$, for all natural number $n$. The covering $\Gamma_n$ is called \emph{level} $n$ of $\ef$. It is worth mentioning that a fractal structure induces a transitive base of quasi-uniformity (and hence, a topology) given by the transitive family of entourages 
$U_{\Gamma_n}=\{(x,y)\in X\times X: y\in X\setminus\cup_{A\in \Gamma_n, x\notin A}A\}$.
Along the sequel, we shall allow that a set could appear twice or more in any level of a fractal structure. 
Let $\ef$ be a fractal structure on $X$ and assume that $\St(x,\ef)=\{\St(x,\Gamma_n)\}_{n\in \N}$ is a neighborhood base for all $x\in X$, where $\St(x,\Gamma_n)=\cup\{A\in \Gamma_n:x\in A\}$. Then $\ef$ is called a starbase fractal structure.
A fractal structure is said to be finite if all its levels are finite coverings. An example of a finite fractal structure is the one which any $\IFS-$attractor can be always endowed with naturally. Such a fractal structure plays a key role in this paper so we shall formally define it next.
\begin{definition}[c.f.~\cite{Arenas2011}, Definition 4.4]\label{def:nfsifs}
Let $\mathcal{F}$ be an $\IFS$ whose attractor is $\K$. The natural fractal structure on $\K$ as a self-similar set is given by the countable family of coverings $\ef=\{\Gamma_n\}_{n\in \N}$, where $\Gamma_n=\{f_{\mathbf{i}}(\K):\mathbf{i}\in \Sigma^n\}$.
\end{definition}
Alternatively, the levels of the natural fractal structure for any $\IFS-$attractor $\K$ can be described as $\Gamma_1=\{f_i(\K):i\in \Sigma\}$, and $\Gamma_{n+1}=\{f_i(A):A\in \Gamma_n,i\in \Sigma\}$ for every $n\in \N$.
It is also worth mentioning that such a natural fractal structure is starbase (c.f.~\cite[Theorem 4.7]{Arenas2011}). In addition, the next remark will result useful for upcoming purposes.
\begin{remark}\label{obs:incomp}
All the elements in a same level $\Gamma_n$ are incomparable.
\end{remark}





\section{Fractal dimensions for fractal structures} \label{sec:3}

The fractal dimension models for a fractal structure involved along this paper, i.e., fractal dimensions III and IV, have been explored in previous works by the authors (c.f.~\cite{DIM3,DIM4}) and can be considered as subsequent models from those studied in~\cite{DIM1}. It is worth pointing out that they allowed to generalize both box dimension (c.f.~\cite[Theorem 4.15]{DIM3}) and Hausdorff dimension (c.f.~\cite[Theorem 3.13]{DIM4}) in the context of Euclidean sets endowed with their natural fractal structures (c.f.~\cite[Definition 3.1]{DIM1}). Thus, they become ideal candidates to explore the self-similar structure of $\IFS-$attractors. Next, we provide their definitions. 

Let $\ef$ be a fractal structure on a metric space $(X,\rho)$. We shall define $\anf$ as the collection consisting of all the elements in level $n$ of $\ef$ that intersect a subset $F$ of $X$. Mathematically, $\anf=\{A\in \Gamma_n:A\cap F\neq \emptyset\}$. Further, let $\dm(\Gamma_n)=\sup\{\dm(A):A\in \Gamma_n\}$, and $\dm(F,\Gamma_n)=\sup\{\dm(A):A\in \anf\}$, as well. 
\begin{definition}(c.f.~\cite[Definition 4.2]{DIM3} and \cite[Definition 3.2]{DIM4})\label{def:1}
Assume that $\dm(F,\Gamma_n)\to 0$ and consider the following expression, where $k=3,4$:
\[\mathcal{H}_{n,k}^s(F)=\inf\left\{\sum \dm(A_i)^s:\{A_i\}_{i\in I}\in \ankf\right\}, \text{with}\]
\begin{enumerate}[(i)]
\item $\mathcal{A}_{n,3}(F)=\{\mathcal{A}_l(F):l\geq n\}$.\label{dim:3}
\item $\mathcal{A}_{n,4}(F)=\{\{A_i\}_{i\in I}:A_i\in \cup_{l\geq n}\Gamma_l,F\subseteq \cup_{i\in I}A_i,\cd(I)<\infty\}$. Here, $\cd(I)$ denotes the cardinal number of $I$.
\end{enumerate}
In addition, let $\hks=\lim_{n\to \infty}\mathcal{H}_{n,k}^s(F)$. By the fractal dimension III (resp., IV) of $F$, we shall understand the (unique) critical point satisfying the identity
\[\dim_{\ef}^k(F)=\sup\{s\geq 0:\hks=\infty\}=\inf\{s\geq 0:\hks=0\}.\]
\end{definition}
It is worth pointing out that fractal dimension III always exists since the sequence $\{\mathcal{H}_{n,3}^s(F)\}_{n\in \N}$ is monotonic in $n\in \N$. 
Likewise, the hypothesis $\dm(F,\Gamma_n)\to 0$, though necessary in such a definition, is not too restrictive as the following remark points out. 
\begin{remark}\label{obs:1}
Let $\K$ be an $\IFS-$attractor (with $\ef$ the natural fractal structure as a self similar set). Then it holds that $\dm(\K,\Gamma_n)\to 0$, since the sequence of diameters $\{\dm(\Gamma_n)\}_{n\in \N}$ decreases geometrically.
\end{remark}
 
\section{Some Moran's type theorems under the $\OSC$}\label{sec:4}

One of the main goals in this paper is to explore some separation conditions for $\IFS-$attractors in the context of fractal structures. To deal with, we shall endow each attractor with its natural fractal structure as self-similar set (c.f.~either Definition \ref{def:nfsifs} or Remark \ref{obs:1}). Next, we collect several assumptions regarding the $\IFS-$attractors involved along this paper. They are stated in the general context of complete metric spaces.

\begin{hypo}\label{hypo:1}
Let $(X,\F)$ be an $\IFS$, where $X$ is a complete metric space, $\F=\{f_1,\ldots,f_k\}$ is a finite collection of similitudes on $X$, and $\K$ is the $\IFS-$attractor of $\F$.  
In addition, let $\ef$ be the natural fractal structure on $\K$ as a self-similar set (c.f.~Definition \ref{def:nfsifs}), and $c_i$ be the similarity ratio of each $f_i\in \F$. 
\end{hypo}
If $(X,\F)$ satisfies $\IFS$ conditions \ref{hypo:1}, then we shall say, for short, that $\F$ is under $\IFS$ conditions \ref{hypo:1}. 
It is worth mentioning that all the results provided in this paper stand under $\IFS$ conditions~\ref{hypo:1}.

Next, we recall the concept of similarity dimension for $\IFS-$attractors. 
\begin{definition}\label{def:sdim}
Let $\F$ be an $\IFS$ and $\K$ its attractor. 
By the similarity dimension of $\K$, we shall understand the unique solution $\alpha>0$ of the equation $\sum_{i=1}^k c_i^s=1$. In other words, the similarity dimension of $\K$ is the unique value $\alpha>0$ such that $P(\alpha)=0$, where $P(s)=\sum_{i=1}^{k} c_i^{s}-1$.
\end{definition}
Along the sequel, $\alpha$ will denote the similarity dimension of an $\IFS-$attractor. It is worth noting that (without any additional assumption) $\mathcal{H}_{\rm H}^{\alpha}(\K)<\infty$ for any IFS-attractor $\K$ (c.f.~\cite[Proposition 4 (i)]{HUT81}).

The two results that follow are especially useful for upcoming purposes. The first one states that fractal dimension III (c.f.~Definition \ref{def:1} (\ref{dim:3})) equals the similarity dimension of $\IFS-$attractors without requiring the $\IFS$ $\F$ (resp., the attractor $\K$) any additional separation property. On the other hand, we also recall the classical Moran's Theorem, a standard result that gives the equality between both the Hausdorff and the similarity dimensions of $\IFS-$attractors lying under the $\OSC$.

\noindent
\textbf{A note to readers.} Each theoretical result provided along this paper has been assigned one of the two following labels: $\IFS$ or $\EIFS$. In the first case, it helps the reader to keep in mind that the corresponding result stands for attractors on complete metric spaces, whereas the label $\EIFS$ means that the result holds for Euclidean $\IFS-$attractors.

\begin{theorem}[IFS] (c.f.~\cite[Theorem 4.20]{DIM3})\label{teo:OSC-H=3}
$\tres(\K)=\alpha$ and $0<\mathcal{H}_3^{\alpha}(\K)<\infty$.
\end{theorem}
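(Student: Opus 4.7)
The plan is to leverage the explicit combinatorial structure of the natural fractal structure $\ef$ on $\K$ to compute $\mathcal{H}_{n,3}^s(\K)$ in closed form, and then read off both the critical exponent and the value of the limiting measure at that exponent. The key is that each element of level $\Gamma_l$ is a piece $f_{\mathbf{i}}(\K)$ with $\mathbf{i}\in\Sigma^l$, i.e., a similar copy of $\K$ with contraction ratio $c_{\mathbf{i}}=c_{i_1}\cdots c_{i_l}$.

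First, I would observe that $\mathcal{A}_l(\K)=\Gamma_l$ for every $l\in\N$, since each piece $f_{\mathbf{i}}(\K)$ is a nonempty subset of $\K$. Using the standard fact that similarities multiply diameters by their ratio, I get $\dm(f_{\mathbf{i}}(\K))=c_{\mathbf{i}}\dm(\K)$, whence by the multiplicativity of the word product,
\[\sum_{A\in \mathcal{A}_l(\K)}\dm(A)^s=\dm(\K)^s\sum_{\mathbf{i}\in \Sigma^l}c_{\mathbf{i}}^s=\dm(\K)^s\left(\sum_{i=1}^k c_i^s\right)^l.\]
Since in the definition of $\mathcal{H}_{n,3}^s$ the only admissible coverings are those of the form $\mathcal{A}_l(\K)$ with $l\geq n$ (c.f.~Definition~\ref{def:1}(\ref{dim:3})), this yields
\[\mathcal{H}_{n,3}^s(\K)=\dm(\K)^s\cdot\inf_{l\geq n}\left(\sum_{i=1}^k c_i^s\right)^l.\]

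Next, I would split into three cases according to the sign of $P(s)=\sum_{i=1}^k c_i^s-1$, which is strictly decreasing in $s$ (since $c_i\in(0,1)$) and vanishes exactly at $s=\alpha$ by Definition~\ref{def:sdim}. At $s=\alpha$, the base of the exponent equals $1$, so $\mathcal{H}_{n,3}^{\alpha}(\K)=\dm(\K)^{\alpha}$ for every $n$, yielding $\mathcal{H}_3^{\alpha}(\K)=\dm(\K)^{\alpha}\in(0,\infty)$. For $s<\alpha$, we have $\sum c_i^s>1$, so $(\sum c_i^s)^l$ is strictly increasing in $l$; hence $\mathcal{H}_{n,3}^s(\K)=\dm(\K)^s(\sum c_i^s)^n\to\infty$ as $n\to\infty$, giving $\mathcal{H}_3^s(\K)=\infty$. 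For $s>\alpha$, $\sum c_i^s<1$, so $(\sum c_i^s)^l\to 0$; thus $\mathcal{H}_{n,3}^s(\K)=0$ for every $n$, and $\mathcal{H}_3^s(\K)=0$. These three cases together identify $\alpha$ as the unique critical exponent, so $\tres(\K)=\alpha$.

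The argument poses no serious obstacle once one has fixed the convention $\dm(\K)>0$, which holds under $\IFS$ conditions~\ref{hypo:1} whenever the attractor is non-degenerate. The essential point is that the rigidity in the definition of $\mathcal{A}_{n,3}(\K)$ — only the canonical level coverings $\Gamma_l$ are admissible, not arbitrary finite subcoverings drawn from different levels — makes the calculation tight, and it is precisely this rigidity that forces $\tres$ to coincide with the similarity dimension without requiring any separation hypothesis on the system $\F$. Contrast this with $\cuatro$, where mixing levels is allowed, and where the coincidence with $\alpha$ will presumably demand some separation property (as anticipated in the introduction).
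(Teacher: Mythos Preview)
Your argument is correct and complete. The paper does not give its own proof of this theorem; it merely cites \cite[Theorem 4.20]{DIM3}, so there is no in-paper proof to compare against. Your computation of $\sum_{A\in\Gamma_l}\dm(A)^s=\dm(\K)^s\bigl(\sum_{i=1}^k c_i^s\bigr)^l$ and the ensuing trichotomy on the sign of $P(s)$ is exactly the natural route and is, in fact, the standard argument for this type of result. Your closing caveat about needing $\dm(\K)>0$ is well placed: if all the $f_i$ share a common fixed point then $\K$ degenerates to a single point, $\mathcal{H}_3^{\alpha}(\K)=0$, and the strict inequality $0<\mathcal{H}_3^{\alpha}(\K)$ fails; this degenerate case is tacitly excluded throughout the paper (and in the cited reference), so noting it explicitly is appropriate rather than a gap.
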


\begin{moran*}[$\EIFS$]
$\OSC\Rightarrow \h(\K)=\alpha$ and $0<\mathcal{H}_{\rm H}^{\alpha}(\K)<\infty$. 
\end{moran*}
By a Moran's type theorem, we shall understand a result that yields the equality between a fractal dimension $\dim$ of an $\IFS-$attractor $\K$ and its similarity dimension, i.e., $\dim(\K)=\alpha$. 
Theorem \ref{teo:OSC-H=3} and Moran's Theorem give the following result involving the fractal dimension III of $\K$ in the Euclidean case.

\begin{corollary}[$\EIFS$](c.f.~\cite[Corollary 4.22]{DIM3}) 
\label{cor:H=3=B}
$\OSC\Rightarrow \dih(\K)=\tres(\K)=~\alpha$.
\end{corollary}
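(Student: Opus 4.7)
The plan is to read the corollary as a direct juxtaposition of the two statements immediately preceding it. By Theorem \ref{teo:OSC-H=3}, which holds unconditionally under IFS conditions \ref{hypo:1}, we have $\tres(\K)=\alpha$; crucially, no separation property is invoked in that statement. By Moran's Theorem, under the OSC in the Euclidean setting we also have $\dih(\K)=\alpha$. Since the corollary is asserted in the EIFS regime and under the OSC, both results apply simultaneously to the same attractor $\K$ and the same value $\alpha$ of the similarity dimension of Definition \ref{def:sdim}, and one just chains the equalities to conclude $\dih(\K)=\tres(\K)=\alpha$.

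The only point meriting explicit verification is that the two invoked results live in compatible settings. Theorem \ref{teo:OSC-H=3} carries the IFS label, so it holds on any complete metric space $X$ and, in particular, on $X=\R^d$, which is the ambient space for the EIFS context of the corollary; this ensures that the computation of $\tres(\K)$ goes through with $\ef$ the natural fractal structure on $\K$ as a self-similar set. Moran's Theorem carries the EIFS label and therefore requires exactly the Euclidean ambient assumed in the corollary, together with the OSC hypothesis that is already part of the corollary's statement.

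No genuine obstacle is expected: the label \emph{Corollary} is accurate, as the statement is purely a bookkeeping consequence of the two named theorems. Its role in the paper is to record the novelty contributed by Theorem \ref{teo:OSC-H=3} in the Euclidean OSC regime, namely that the fractal dimension III coincides not only with the similarity dimension $\alpha$ (which is free of separation hypotheses) but also, thanks to Moran's Theorem, with the Hausdorff dimension of $\K$.
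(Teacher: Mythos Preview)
Your proof is correct and matches the paper's own approach exactly: the corollary is stated as an immediate consequence of Theorem~\ref{teo:OSC-H=3} (giving $\tres(\K)=\alpha$ unconditionally) together with Moran's Theorem (giving $\dih(\K)=\alpha$ under the $\OSC$ in the Euclidean setting). Your additional remark about the compatibility of the $\IFS$ and $\EIFS$ labels is accurate and makes explicit what the paper leaves implicit.
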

\noindent
The following result we recall is quite general and stands for finite fractal structures. 
\begin{lemma} (c.f.~\cite[Proposition 3.5 (3)]{DIM4})\label{lema:1}
Let $\ef$ be a finite fractal structure on a metric space $(X,\rho)$, $F$ be a subset of $X$, and assume that $\deltafn\to 0$. Then $\dih(F)\leq\cuatro(F)\leq \tres(F)$.
\end{lemma}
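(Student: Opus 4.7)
The plan is to prove the two inequalities separately, both by comparing the admissible families of coverings used to define each of the quantities involved. The right-hand inequality is essentially a set-inclusion, while the left-hand one requires one extra step to convert covers by members of the fractal structure into honest $\delta$-covers.

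For $\cuatro(F)\leq \tres(F)$, I would observe that every family admissible for fractal dimension III is also admissible for fractal dimension IV. Fixing $l\geq n$, since $\ef$ is finite the family $\mathcal{A}_l(F)\subseteq \Gamma_l$ is a \emph{finite} cover of $F$ whose members all lie in $\bigcup_{m\geq n}\Gamma_m$, so it belongs to $\mathcal{A}_{n,4}(F)$. Hence $\mathcal{A}_{n,3}(F)\subseteq \mathcal{A}_{n,4}(F)$, and taking infima gives $\mathcal{H}_{n,4}^s(F)\leq \mathcal{H}_{n,3}^s(F)$ for every $s\geq 0$ and every $n\in \N$. Passing to the limit in $n$ and comparing critical exponents then yields $\cuatro(F)\leq \tres(F)$.

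For $\dih(F)\leq \cuatro(F)$, the idea is to bound the Hausdorff pre-measure $\hsd(F)$ from above by $\mathcal{H}_{n,4}^s(F)$ once $n$ is large enough. Given $\delta>0$, the hypothesis $\deltafn\to 0$ supplies an $n_0$ with $\dm(F,\Gamma_l)<\delta$ for every $l\geq n_0$. Taking any $\{A_i\}_{i\in I}\in \mathcal{A}_{n_0,4}(F)$, I would discard those $A_i$ that miss $F$; the remaining subfamily still covers $F$ and has a smaller $s$-power sum, and each of its members lies in some $\mathcal{A}_l(F)$ with $l\geq n_0$, so $\dm(A_i)\leq \dm(F,\Gamma_l)<\delta$. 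The pruned subfamily is therefore a $\delta$-cover of $F$, whence $\hsd(F)\leq \sum_i \dm(A_i)^s$. Taking the infimum over $\mathcal{A}_{n_0,4}(F)$, together with the monotonicity of $\mathcal{H}_{n,4}^s(F)$ in $n$ (a smaller admissible family forces a larger infimum), yields $\hsd(F)\leq \mathcal{H}_{n_0,4}^s(F)\leq \hcuatros(F)$. Letting $\delta\to 0^+$ produces $\hs(F)\leq \hcuatros(F)$ for every $s\geq 0$, and comparing critical exponents gives $\dih(F)\leq \cuatro(F)$.

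The only mildly delicate point is the pruning step in the second argument: families in $\mathcal{A}_{n,4}(F)$ may mix levels, and a member of some high-indexed $\Gamma_l$ that fails to meet $F$ is not controlled by the hypothesis $\deltafn\to 0$, which only bounds diameters of elements of $\anf$. Restricting to the subcover of members intersecting $F$ is what makes the uniform diameter bound available; once that is in place the rest of the argument is routine.
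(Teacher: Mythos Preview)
Your argument is correct. The inclusion $\mathcal{A}_{n,3}(F)\subseteq\mathcal{A}_{n,4}(F)$ (using finiteness of the levels) gives the right-hand inequality, and your pruning step together with the monotonicity of $\mathcal{H}_{n,4}^s(F)$ in $n$ gives the left-hand one; the subtle point you flag about elements of a mixed-level cover that miss $F$ is handled properly by restricting to the subfamily of sets meeting $F$.

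Note, however, that the present paper does not prove this lemma at all: it is simply quoted from \cite[Proposition~3.5~(3)]{DIM4}. So there is no in-paper proof to compare against; your write-up supplies exactly the kind of direct measure-comparison argument one would expect for such a result, and it stands on its own.
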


\begin{corollary}[$\IFS$] \label{cor:2}
$\dih(\K) \leq \cuatro(\K)\leq \tres(\K)=\alpha$.
\end{corollary}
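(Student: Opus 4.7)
The plan is that this is a direct combination of the two results recalled immediately above the statement: Theorem \ref{teo:OSC-H=3} and Lemma \ref{lema:1}. No new machinery is required, and there is essentially no obstacle to overcome.

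First I would verify that the hypotheses of Lemma \ref{lema:1} are in force for $F=\K$ endowed with its natural fractal structure $\ef$ as a self-similar set (Definition \ref{def:nfsifs}). On the one hand, $\ef$ is finite: each level $\Gamma_n=\{f_{\mathbf{i}}(\K):\mathbf{i}\in\Sigma^n\}$ consists of at most $k^n$ elements. On the other hand, Remark \ref{obs:1} guarantees that $\dm(\K,\Gamma_n)\to 0$, since the diameters $\dm(\Gamma_n)$ decay geometrically with ratio $\max_i c_i<1$.

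Having checked the hypotheses, Lemma \ref{lema:1} applied to $F=\K$ yields the chain
\[\dih(\K)\leq \cuatro(\K)\leq \tres(\K).\]
Finally, Theorem \ref{teo:OSC-H=3} identifies the right-hand side with the similarity dimension, namely $\tres(\K)=\alpha$, which closes the string of inequalities and completes the proof.

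The only subtlety (and it is a mild one) is simply the bookkeeping of noting that the ambient metric space here is $X$ while the fractal structure lives on $\K$; since Lemma \ref{lema:1} allows $F$ to be any subset of the underlying metric space supporting the finite fractal structure, one just applies it with $\K$ playing the role of both the ambient space and the set $F$. No other step in the argument carries any real difficulty.
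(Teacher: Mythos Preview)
Your argument is correct and follows essentially the same approach as the paper: verify that $\ef$ is finite and that $\dm(\K,\Gamma_n)\to 0$ (via Remark~\ref{obs:1}), apply Lemma~\ref{lema:1} to obtain $\dih(\K)\leq\cuatro(\K)\leq\tres(\K)$, and then invoke Theorem~\ref{teo:OSC-H=3} for $\tres(\K)=\alpha$.
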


\begin{proof}
It follows as a consequence of Lemma \ref{lema:1}. In fact, recall that the natural fractal structure which any $\IFS-$attractor can be endowed with is finite. Further, such a fractal structure also satisfies that $\dm(\K,\Gamma_n)=\dm(\Gamma_n)\to 0$, since the sequence of diameters $\{\dm(\Gamma_n)\}_{n\in \N}$ decreases geometrically in the case of self-similar sets (c.f.~Remark~\ref{obs:1}). Finally, Theorem \ref{teo:OSC-H=3} gives $\tres(\K)=\alpha$.\newline
\end{proof}
The following result provides a Moran's type theorem (under the $\OSC$) involving fractal dimension IV as a consequence of previous corollaries. 
It can be understood as an extension of Corollary~\ref{cor:H=3=B}. 
\begin{theorem}[$\EIFS$]\label{teo:osc->3=4=H} 
$\OSC\Rightarrow \dih(\K)=\cuatro(\K)=\tres(\K)=\alpha$.
\end{theorem}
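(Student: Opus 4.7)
The plan is to obtain the chain of equalities by sandwiching $\cuatro(\K)$ between $\dih(\K)$ and $\tres(\K)$, both of which will already be known (under the OSC) to coincide with $\alpha$.

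First, I would invoke Corollary \ref{cor:2}, which holds under $\IFS$ conditions \ref{hypo:1} and supplies, with no separation hypothesis at all, the two-sided inequality
\[
\dih(\K) \leq \cuatro(\K) \leq \tres(\K) = \alpha.
\]
This already pins down $\tres(\K) = \alpha$ (by Theorem \ref{teo:OSC-H=3}) and gives the upper bound $\cuatro(\K) \leq \alpha$ for free.

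Next, I would bring in the Euclidean hypothesis and the $\OSC$ to produce the matching lower bound on $\dih(\K)$. This is exactly what Corollary \ref{cor:H=3=B} provides: under the $\OSC$ for an $\EIFS$, one has $\dih(\K) = \tres(\K) = \alpha$. Combining this with the display above yields
\[
\alpha \;=\; \dih(\K) \;\leq\; \cuatro(\K) \;\leq\; \tres(\K) \;=\; \alpha,
\]
which forces all four quantities to coincide, and the statement of Theorem \ref{teo:osc->3=4=H} follows.

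There is really no delicate step here: the whole argument is a sandwich based on two earlier results. The only thing worth being careful about is that Corollary \ref{cor:2} is stated in the general $\IFS$ setting while Corollary \ref{cor:H=3=B} is Euclidean, so the statement must be labelled $\EIFS$ (as it is), and one must check that the natural fractal structure used in both corollaries is the same, which is guaranteed by $\IFS$ conditions \ref{hypo:1} together with Remark \ref{obs:1}. No extra estimate on coverings or diameters is needed beyond what is already packaged into Corollary \ref{cor:2}.
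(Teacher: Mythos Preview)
Your proof is correct and follows exactly the same approach as the paper's own proof: invoke Corollary~\ref{cor:2} for the sandwich $\dih(\K)\leq \cuatro(\K)\leq \tres(\K)=\alpha$, then apply Corollary~\ref{cor:H=3=B} to force the equalities. The only difference is that you spell out the sandwiching in more detail than the paper does.
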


\begin{proof}
Notice that $\dih(\K)\leq \cuatro(\K)\leq \tres(\K)=\alpha$ by Corollary \ref{cor:2}.
Corollary \ref{cor:H=3=B} gives the result.
\end{proof}

To conclude this section, we recall two key results explored by Schief (c.f.~\cite{Schief1994,Schief1996}). They provide sufficient conditions to reach Moran's type theorems in both contexts: complete metric spaces and Euclidean $\IFS-$attractors. Such conditions consist of appropriate separation properties for $\IFS-$attractors.
\begin{theorem}\label{teo:schief}~
\begin{enumerate}
\item [$(\EIFS)$] $\SOSC\Leftrightarrow \OSC\Leftrightarrow \mathcal{H}_{\rm H}^{\alpha}(\K)>0 \Rightarrow \h(\K)=\alpha$.
\item [$(\IFS)$] $\mathcal{H}_{\rm H}^{\alpha}(\K)>0 \Rightarrow \SOSC\Rightarrow \h(\K)=\alpha$.
\end{enumerate}
\end{theorem}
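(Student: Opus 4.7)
The plan is to decompose both chains into modular pieces, most of which are classical. The rightmost arrow $\mathcal{H}_{\rm H}^{\alpha}(\K) > 0 \Rightarrow \h(\K) = \alpha$ in both the $\EIFS$ and $\IFS$ versions is the cheapest: Hutchinson's estimate $\mathcal{H}_{\rm H}^{\alpha}(\K) < \infty$, recalled just above the statement, gives $\h(\K) \leq \alpha$ unconditionally, while positivity of the $\alpha$-mass forces $\h(\K) \geq \alpha$ directly from the definition of Hausdorff dimension. So the genuine content lies in relating the three middle separation properties to one another.

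For the Euclidean chain, I would first establish $\OSC \Rightarrow \mathcal{H}_{\rm H}^{\alpha}(\K) > 0$ via Moran's mass distribution argument: push the Bernoulli measure on $\Sigma^{\N}$ with weights $c_i^{\alpha}$ forward by the address map $\pi$ to obtain a probability $\mu$ on $\K$, then bound $\mu(B(x,r)) \leq C r^{\alpha}$ using that a feasible open set $\mathcal{V}$ contains some ball of fixed radius, so that the images $f_{\mathbf{i}}(\mathcal{V})$ relevant at scale $r$ are pairwise disjoint and Lebesgue-packed inside any ball of radius $r$. The equivalence $\OSC \Leftrightarrow \SOSC$ in $\R^d$ is then a short trick: iterating inside a feasible $\mathcal{V}$, the images $f_{\mathbf{i}}(\mathcal{V})$ cluster on points of $\K$, so some $f_{\mathbf{i}}(\mathcal{V})$ meets $\K$ while still being feasible. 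The hard converse $\mathcal{H}_{\rm H}^{\alpha}(\K) > 0 \Rightarrow \SOSC$ is Schief's 1994 theorem: at a point of positive upper $\alpha$-density in $\K$, zoom in by $f_{\mathbf{i}}^{-1}$ along a word realizing one of its addresses and pass to a subsequential limit; the resulting tangent configuration exhibits a ball disjoint from all other pieces at some level of $\ef$, which pulled back is a feasible open set intersecting $\K$.

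For the general $\IFS$ chain, the implication $\SOSC \Rightarrow \h(\K) = \alpha$ is again a Frostman estimate on the pushed Bernoulli measure: fixing $x_1 \in \K \cap \mathcal{V}$ and $B(x_1, r_0) \subseteq \mathcal{V}$, the images $f_{\mathbf{i}}(B(x_1, r_0))$ are pairwise disjoint balls of controlled size, and a packing count at scale $r$ bounds the number of pre-fractals that can meet any ball of radius $r$ in $X$. The implication $\mathcal{H}_{\rm H}^{\alpha}(\K) > 0 \Rightarrow \SOSC$ in this generality is Schief's 1996 result, obtained by replacing the Lebesgue volume estimate from the Euclidean argument by an intrinsic $\alpha$-mass estimate together with a density theorem for $\mathcal{H}_{\rm H}^{\alpha}$ restricted to $\K$. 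The main obstacle of the whole package, and the reason both chains fail to close in the other direction, is precisely this converse from positive Hausdorff measure to $\SOSC$: without a globally comparable reference volume in the ambient space, one must work with tangent measures and density points of $\mathcal{H}_{\rm H}^{\alpha}$ on $\K$ directly, and controlling their supports well enough to carve out an honest feasible open set in $X$ is the technical core of both Schief papers.
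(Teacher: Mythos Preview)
The paper does not prove this theorem at all: it is stated as a recollection of Schief's results, with a bare citation to \cite{Schief1994,Schief1996} and no argument. So there is nothing to compare against, and your sketch of the classical arguments goes well beyond what the paper does.

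Your outline is broadly correct and identifies the right references and the right hard step (positive $\alpha$-measure $\Rightarrow \SOSC$). One point is wrong, though it is redundant in your own scheme: your ``short trick'' for $\OSC \Rightarrow \SOSC$ in $\R^d$ does not work as written. An image $f_{\mathbf{i}}(\mathcal{V})$ of a feasible open set is \emph{not} itself feasible for the original IFS --- you would need $f_j(f_{\mathbf{i}}(\mathcal{V})) \subseteq f_{\mathbf{i}}(\mathcal{V})$ for every $j\in\Sigma$, i.e.\ $f_{j\mathbf{i}}(\mathcal{V}) \subseteq f_{\mathbf{i}}(\mathcal{V})$, which fails in general. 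The implication $\OSC \Rightarrow \SOSC$ in the Euclidean case is in fact obtained by going around the cycle $\OSC \Rightarrow \mathcal{H}_{\rm H}^{\alpha}(\K)>0 \Rightarrow \SOSC$ that you already set up, so you can simply drop the direct argument. A second, smaller caveat: in the $\IFS$ paragraph, a ``packing count at scale $r$'' in a bare complete metric space cannot appeal to ambient volume; Schief's 1996 argument instead uses the self-similar measure on $\K$ itself to control the multiplicity, which is worth flagging since that is exactly where the $\SOSC$ hypothesis (a point of $\K$ inside $\mathcal{V}$) is spent.
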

Interestingly, Mattila provided the following counterexample which allows to justify that Theorem~\ref{teo:schief} is best possible.

\begin{matti*}[c.f.~\cite{Schief1994,Schief1996}]
Let $\F=\{f_1,f_2,f_3\}$ be an $\EIFS$ on $\R^2$, where $f_i(x)=x_i+\tfrac13(x-x_i)$ with $x_1=(0,0),x_2=(1,0),$ and $x_3=(\tfrac12,\tfrac{1}{2\sqrt{3}})$. The attractor of $\F$, $\K$, is a nonconnected Sierpi\'nski gasket in the plane which satisfies the $\SOSC$ and has similarity dimension $\alpha=1$. Thus, almost all Lebesgue projections of $\K$ on $1-$dimensional subspaces of $\R^2$ (that are self-similar set themselves), have Hausdorff dimension $1$ (due to Marstrand's Projection Theorem, c.f.~\cite[Projection theorem 6.1]{FAL90} and \cite{Kaufman1968,Marstrand1954}) but zero $\mathcal{H}_{\rm H}^1$ measure. 
\end{matti*}
From Theorem~\ref{teo:schief}, it holds that the $\OSC$ provides a sufficient (though not necessary) condition to get the equality between the similarity and the Hausdorff dimensions of Euclidean $\IFS-$attractors. It is worth mentioning that the implication $\mathcal{H}_{\rm H}^{\alpha}(\K)>0\Rightarrow \SOSC$ was contributed by Schief (c.f.~\cite[Theorem 2.1]{Schief1994}). This guarantees the equivalence among $\OSC,\SOSC$, and the $\alpha-$dimensional Hausdorff measure of $\K$. 
However, in the case of complete metric spaces, the $\OSC$ no longer leads to a Moran's type theorem (c.f.~\cite[Example 3.1]{Schief1996}). Thus, it must be replaced by the $\SOSC$ for that purpose (c.f.~\cite[Theorem 2.6]{Schief1996}). In addition, \cite[Example 3.2]{Schief1996} highlights that the $\SOSC$ does not suffice to guarantee a positive Hausdorff measure in the general case. Also, from Mattila's Counterexample it holds that the implication $\h(\K)=\alpha\Rightarrow \SOSC$ does not stand, in general.

\section{Irreducible fractal structures}

In this section, we introduce the concept of an irreducible fractal structure and characterize it in terms of the similarity dimension of an $\IFS-$attrac\-tor and its fractal dimensions III and IV, as well.

\begin{definition}
Let $\Gamma$ be a covering of $X$. We say that $\Gamma$ is irreducible provided that it has no proper subcoverings (c.f.~\cite[Problem 20D]{Willard1970}). By an irreducible fractal structure, we shall understand a fractal structure whose levels are irreducible coverings.
\end{definition}

First, we provide a sufficient condition leading to irreducible fractal structures. It consists of the equality between both fractal dimensions III and IV.

\begin{prop}[IFS]\label{prop:4=3->WOSC}
$\cuatro(\K)=\tres(\K)=\alpha\Rightarrow \ef$ irreducible.
\end{prop}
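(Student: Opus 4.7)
The plan is to argue by contrapositive: assume the natural fractal structure $\ef$ fails to be irreducible at some level $\Gamma_n$; I will then produce $\beta<\alpha$ with $\mathcal{H}_4^{\beta}(\K)=0$, which forces $\cuatro(\K)\leq\beta<\alpha$ and contradicts the hypothesis $\cuatro(\K)=\alpha$. Reducibility at level $n$ supplies some $\mathbf{i}_0\in\Sigma^n$ with $\K_{\mathbf{i}_0}\subseteq\bigcup\{\K_{\mathbf{l}}:\mathbf{l}\in\Sigma^n\setminus\{\mathbf{i}_0\}\}$, and the rest of the proof exploits this single redundancy self-similarly.

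The central step is a combinatorial amplification of that redundancy into arbitrarily thin covers. I would show by induction on $r$ that
\[
\mathcal{C}_r=\{\K_{\mathbf{l}_1\cdots\mathbf{l}_r}:\mathbf{l}_j\in\Sigma^n\setminus\{\mathbf{i}_0\}\text{ for all }j\}\subseteq\Gamma_{rn}
\]
still covers $\K$. The base case $r=1$ is exactly the reducibility hypothesis. For the inductive step, the factorisation $\K_{\mathbf{l}_1\cdots\mathbf{l}_{r+1}}=f_{\mathbf{l}_1}(\K_{\mathbf{l}_2\cdots\mathbf{l}_{r+1}})$ gives
\begin{align*}
\textstyle\bigcup\mathcal{C}_{r+1}&=\bigcup_{\mathbf{l}_1\neq\mathbf{i}_0}f_{\mathbf{l}_1}\bigl({\textstyle\bigcup}\,\mathcal{C}_r\bigr)\supseteq\bigcup_{\mathbf{l}_1\neq\mathbf{i}_0}f_{\mathbf{l}_1}(\K)=\bigcup_{\mathbf{l}_1\neq\mathbf{i}_0}\K_{\mathbf{l}_1}\supseteq\K,
\end{align*}
where the first inclusion uses the inductive hypothesis and the last uses the base case.

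For any fixed $m$ and any $r$ with $rn\geq m$, the cover $\mathcal{C}_r$ belongs to $\mathcal{A}_{m,4}(\K)$. Using $\dm(\K_{\mathbf{l}_1\cdots\mathbf{l}_r})=c_{\mathbf{l}_1}\cdots c_{\mathbf{l}_r}\dm(\K)$, the sum of $s$-th powers of diameters of pieces in $\mathcal{C}_r$ factorises as $\phi(s)^r\dm(\K)^s$, where $\phi(s)=\sum_{\mathbf{l}\in\Sigma^n\setminus\{\mathbf{i}_0\}}c_{\mathbf{l}}^s$. Iterating the similarity-dimension identity $\sum_ic_i^{\alpha}=1$ across $n$ coordinates yields $\sum_{\mathbf{l}\in\Sigma^n}c_{\mathbf{l}}^{\alpha}=1$, hence $\phi(\alpha)=1-c_{\mathbf{i}_0}^{\alpha}<1$. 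Since $\phi$ is continuous and strictly decreasing on $(0,\infty)$ (each $c_{\mathbf{l}}<1$), some $\beta<\alpha$ already satisfies $\phi(\beta)<1$. Consequently $\mathcal{H}_{m,4}^{\beta}(\K)\leq\phi(\beta)^r\dm(\K)^{\beta}\to0$ as $r\to\infty$, so $\mathcal{H}_{m,4}^{\beta}(\K)=0$ for every $m$, which gives $\mathcal{H}_4^{\beta}(\K)=0$ and therefore $\cuatro(\K)\leq\beta<\alpha$, contradicting the hypothesis.

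The main obstacle is the amplification in the second paragraph: a single redundancy at level $n$ must propagate simultaneously to \emph{every} block of the word at every iterated level $rn$, and the collection obtained after deleting all such pieces must still cover $\K$. The inductive factorisation through $f_{\mathbf{l}_1}$ handles this cleanly; once it is in hand, the argument reduces to continuity of a finite exponential sum together with the flexibility of finite, mixed-level coverings built into the definition of $\cuatro$.
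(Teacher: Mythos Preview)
Your proof is correct and follows essentially the same approach as the paper's: both argue by contrapositive, remove one redundant piece $\mathbf{i}_0$ from a level, and iterate that redundancy self-similarly to bound $\cuatro(\K)$ below $\alpha$. The only cosmetic difference is that the paper chooses the exponent $t$ with $\sum_{\mathbf{l}\neq\mathbf{i}_0}c_{\mathbf{l}}^{t}=1$ (so the replacement preserves the $t$-sum exactly, giving $\mathcal{H}_4^{t}(\K)<\infty$), whereas you pick $\beta<\alpha$ with $\phi(\beta)<1$ and drive $\mathcal{H}_4^{\beta}(\K)$ all the way to $0$; both choices yield $\cuatro(\K)<\alpha$ by the same mechanism.
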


\begin{proof}
Assume, by the contrary, that $\ef$ is not irreducible. Then there exist $m \in \N$ and $\mathbf{i} \in \Sigma^m$ such that $\Gamma_m \setminus \{f_{\mathbf{i}}(\K)\}$ is a covering of $\K$.
Let $J=\Sigma^m \setminus \{\mathbf{i}\}$. Since $\sum_{\mathbf{j} \in \Sigma^m} c_{\mathbf{j}}^{\alpha}=1$ with $\alpha=\tres(\K)$, then $\sum_{\mathbf{j} \in J} c_{\mathbf{j}}^{\alpha}<1$. Thus, if $t$ is the unique solution of the equation $\sum_{\mathbf{j} \in J} c_{\mathbf{j}}^t=1$, then $t<\alpha$ since the function $s\mapsto \sum_{i=1}^kc_i^s$ is scrictly decreasing in $s$ (c.f.~\cite[Convention (2)]{HUT81}).
On the other hand, 
$\mathcal{H}_{n,4}^t(\K)=\mathcal{H}_{1,4}^t(\K)$ 
for each $n \in \N$, since any element $f_\mathbf{i}(\K) \in \Gamma_k$ with $\mathbf{i} \in \Sigma^k$ for some $k\in \N$, can be replaced by $\{f_{\mathbf{j}}(\K):\mathbf{j} \in \mathbf{i}J\}$, where $\dm(f_{\mathbf{i}}(\K) )^t=\sum_{\mathbf{j} \in \mathbf{i}J } \dm(f_{\mathbf{j}}(\K))^t$. Letting $n\to \infty$, it holds that $\mathcal{H}_4^{t}(\K)=\mathcal{H}_{1,4}^{t}(\K)<\infty$.
Accordingly, $\cuatro(\K) \leq t<\alpha$, a contradiction. 
\end{proof}

The following result stands from Proposition \ref{prop:4=3->WOSC}. It provides another sufficient condition to get irreducible fractal structures involving both fractal dimension III and Hausdorff dimension.  

\begin{corollary}[IFS]\label{cor:csirreducible}
$\dih(\K)=\tres(\K)=\alpha\Rightarrow \ef$ irreducible.
\end{corollary}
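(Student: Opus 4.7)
The plan is to observe that this corollary follows immediately by sandwiching $\cuatro(\K)$ between $\dih(\K)$ and $\tres(\K)$ and then invoking Proposition~\ref{prop:4=3->WOSC}. Specifically, Corollary~\ref{cor:2} (which in turn is a consequence of Lemma~\ref{lema:1} and Theorem~\ref{teo:OSC-H=3}) guarantees under IFS conditions~\ref{hypo:1} that
\[
\dih(\K)\;\leq\;\cuatro(\K)\;\leq\;\tres(\K)\;=\;\alpha.
\]

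First I would assume the hypothesis $\dih(\K)=\tres(\K)=\alpha$. Plugging this into the chain of inequalities above forces $\alpha=\dih(\K)\leq \cuatro(\K)\leq \tres(\K)=\alpha$, so $\cuatro(\K)=\tres(\K)=\alpha$. This is the whole content of the argument: the hypothesis on the extremes collapses the middle term by monotonicity.

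Next, I would simply apply Proposition~\ref{prop:4=3->WOSC} to the equality $\cuatro(\K)=\tres(\K)=\alpha$ just obtained, which yields that the natural fractal structure $\ef$ is irreducible, finishing the proof.

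There is no real obstacle here: the corollary is a routine consequence of the previously established squeeze (Corollary~\ref{cor:2}) together with Proposition~\ref{prop:4=3->WOSC}. The only thing worth being careful about is to make explicit that we are working under IFS conditions~\ref{hypo:1}, so that Corollary~\ref{cor:2} applies (in particular, $\K$ is endowed with its natural, finite, starbase fractal structure with $\dm(\K,\Gamma_n)\to 0$, as noted in Remark~\ref{obs:1}).
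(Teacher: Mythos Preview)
Your proof is correct and follows essentially the same route as the paper: use Corollary~\ref{cor:2} to sandwich $\cuatro(\K)$ between $\dih(\K)$ and $\tres(\K)=\alpha$, collapse the chain under the hypothesis, and then invoke Proposition~\ref{prop:4=3->WOSC}.
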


\begin{proof}
Corollary \ref{cor:2} gives $\dih(\K) \leq \cuatro(\K) \leq \tres(\K)=\alpha$.
Then $\dih(\K)=\alpha$ implies $\cuatro(\K)=\tres(\K)=\alpha$. Hence, $\ef$ is irreducible by Proposition~\ref{prop:4=3->WOSC}.
\end{proof}



Interestingly, the reciprocal of Proposition \ref{prop:4=3->WOSC} can also be stated.


\begin{prop}[IFS]\label{prop:irr->H4>0}
$\ef$ irreducible $\Rightarrow \mathcal{H}_4^{\alpha}(\K)>0$.
\end{prop}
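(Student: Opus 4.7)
The plan is to bound $\mathcal{H}_{n,4}^{\alpha}(\K)$ below by the fixed positive constant $\dm(\K)^{\alpha}$ uniformly in $n$. To this end I would unfold the definitions of $\mathcal{A}_{n,4}(\K)$ and $\mathcal{H}_{n,4}^{\alpha}(\K)$, fix an arbitrary finite covering $\{f_{\mathbf{i}_j}(\K)\}_{j=1}^{N}$ of $\K$ with $\mathbf{i}_j\in \Sigma^{l_j}$ and $l_j\geq n$, and then refine it to a common level. Setting $L=\max_{j}l_j$ and using Hutchinson's equation to decompose $f_{\mathbf{i}_j}(\K)=\cup_{\mathbf{k}\in \Sigma^{L-l_j}}f_{\mathbf{i}_j\mathbf{k}}(\K)$, I obtain a finite multiset of level$-L$ pieces $\{f_{\mathbf{i}_j\mathbf{k}}(\K)\}$ that still covers $\K$.

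The crucial step is to observe that, because $\Gamma_L$ is irreducible, every word of $\Sigma^L$ must appear in this refined multiset. Indeed, by the irreducibility of $\Gamma_L$, for each $\mathbf{i}\in\Sigma^L$ the family $\Gamma_L\setminus\{f_{\mathbf{i}}(\K)\}$ fails to cover $\K$, so there exists a point $x_{\mathbf{i}}\in f_{\mathbf{i}}(\K)$ which lies in no other piece of $\Gamma_L$; since the refined multiset covers $x_{\mathbf{i}}$ by level$-L$ pieces, some $\mathbf{i}_j\mathbf{k}$ must coincide with $\mathbf{i}$. This is the main obstacle, because irreducibility as defined applies only to the canonical levels $\Gamma_L$, not a priori to arbitrary coverings by level$-L$ pieces; the witness point $x_{\mathbf{i}}$ is exactly what bridges that gap.

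Once this combinatorial surjection is in place, the similarity dimension equation closes the proof. Using $\sum_{\mathbf{k}\in \Sigma^{L-l_j}}c_{\mathbf{k}}^{\alpha}=1$ and $\dm(f_{\mathbf{i}}(\K))=c_{\mathbf{i}}\dm(\K)$, I compute
\[
\sum_{j=1}^{N}\dm(f_{\mathbf{i}_j}(\K))^{\alpha}
=\dm(\K)^{\alpha}\sum_{j=1}^{N}\sum_{\mathbf{k}\in \Sigma^{L-l_j}}c_{\mathbf{i}_j\mathbf{k}}^{\alpha}
\geq \dm(\K)^{\alpha}\sum_{\mathbf{i}\in \Sigma^L}c_{\mathbf{i}}^{\alpha}=\dm(\K)^{\alpha},
\]
where the inequality uses that the multiset $\{\mathbf{i}_j\mathbf{k}\}$ contains every word of $\Sigma^L$ at least once. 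Taking the infimum over all admissible coverings yields $\mathcal{H}_{n,4}^{\alpha}(\K)\geq\dm(\K)^{\alpha}$ for every $n\in \N$, and letting $n\to\infty$ gives $\mathcal{H}_{4}^{\alpha}(\K)\geq\dm(\K)^{\alpha}>0$, as desired.
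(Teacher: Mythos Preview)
Your proof is correct and follows essentially the same approach as the paper's: refine an arbitrary admissible covering to a single common level and use irreducibility of that level to force every piece of $\Gamma_L$ to appear, which pins the $\alpha$-sum at (or above) $\dm(\K)^{\alpha}$. The only cosmetic difference is that you argue directly for the uniform lower bound $\mathcal{H}_{n,4}^{\alpha}(\K)\geq \dm(\K)^{\alpha}$, whereas the paper wraps the same computation in a contradiction after first noting $\mathcal{H}_{n,4}^{\alpha}(\K)=\mathcal{H}_{1,4}^{\alpha}(\K)$.
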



\begin{proof}

Let us assume, by the contrary, that 
$\mathcal{H}_4^{\alpha}(\K)=0$.
Observe that any element $f_\mathbf{i}(\K) \in \Gamma_k$ with $\mathbf{i} \in \Sigma^k$ for some $k\in \N$, can be replaced by $\{f_{\mathbf{i}j}(\K):j \in \Sigma\}$, where $\dm(f_{\mathbf{i}}(\K))^{\alpha}=\sum_{j \in \Sigma} \dm(f_{\mathbf{i}j}(\K))^{\alpha}$, as many times as needed. This leads to $\mathcal{H}_{n,4}^{\alpha}(\K)=\mathcal{H}_{1,4}^{\alpha}(\K)$, and letting $n\to \infty$, we have $0=\mathcal{H}_4^{\alpha}(\K)=\mathcal{H}_{1,4}^{\alpha}(\K)$.    
On the other hand, let $\varepsilon=\frac{1}{2}\sum_{i \in \Sigma} \dm(f_{i}(\K))^{\alpha}$ and $\mathcal{A}$ be a finite covering of $\K$ by elements of $\cup_{n\in \N}\Gamma_n$, where $\sum_{A \in \mathcal{A}} \dm(A)^{\alpha}<\varepsilon$. In addition, let $n$ be the greatest level containing (at least) one element of $\mathcal{A}$. Thus, for all $m\leq n$ and each $f_{\mathbf{i}}(\K) \in \mathcal{A} \cap \Gamma_m$, we can replace $f_{\mathbf{i}}(\K)$ by $\{f_{\mathbf{j}}(\K):\mathbf{j} \in \Sigma^n, \mathbf{i} \sqsubseteq \mathbf{j} \}$, where $\dm(f_{\mathbf{i}}(\K) )^{\alpha}=\sum_{\mathbf{j} \in \Sigma^n, \mathbf{i} \sqsubseteq \mathbf{j} } \dm(f_{\mathbf{j}}(\K))^{\alpha}$. Hence, we can construct a covering $\mathcal{A}'\subseteq \Gamma_n$ from $\mathcal{A}$ such that $\sum_{A \in \mathcal{A}'} \dm(A)^{\alpha}=\sum_{A \in \mathcal{A}} \dm(A)^{\alpha}<\varepsilon$. Since $\Gamma_n$ is an irreducible covering and $\mathcal{A}'$ is a subcovering of $\Gamma_n$, then $\mathcal{A}'=\Gamma_n$ but $\sum_{A \in \Gamma_n} \dm(A)^{\alpha}=\sum_{i\in \Sigma}\dm(f_i(\mathcal{K}))^{\alpha}=2\varepsilon$, a contradiction.
\end{proof}

\begin{corollary}[IFS]\label{cor:irr->4=3}
$\ef$ irreducible $\Rightarrow \cuatro(\K)=\tres(\K)=\alpha$.
\end{corollary}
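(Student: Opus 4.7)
The plan is to obtain the inequality $\cuatro(\K) \geq \alpha$ from Proposition~\ref{prop:irr->H4>0} and then combine it with the upper bound already available from Corollary~\ref{cor:2}. Specifically, from Corollary~\ref{cor:2} we have $\cuatro(\K) \leq \tres(\K) = \alpha$ for every $\IFS-$attractor under $\IFS$ conditions~\ref{hypo:1}, so the only work is the reverse inequality.

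First I would invoke Proposition~\ref{prop:irr->H4>0} to record that, under the hypothesis that $\ef$ is irreducible, one has $\mathcal{H}_4^{\alpha}(\K) > 0$. Then I would appeal directly to the variational definition of fractal dimension IV given in Definition~\ref{def:1}, namely
\[
\cuatro(\K) = \inf\{s \geq 0 : \hcuatros(\K) = 0\}.
\]
Since $\mathcal{H}_4^{\alpha}(\K) > 0$, the value $s = \alpha$ does not belong to the set $\{s \geq 0 : \hcuatros(\K) = 0\}$. A standard argument, which I would make explicit in one line, shows that the map $s \mapsto \hcuatros(\K)$ is non-increasing in $s$ (since diameters of covering elements in $\cup_{l\geq n}\Gamma_l$ are bounded, in fact going to $0$ by Remark~\ref{obs:1}), hence $\hcuatros(\K) > 0$ for every $s \leq \alpha$. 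Consequently the infimum above is $\geq \alpha$, giving $\cuatro(\K) \geq \alpha$.

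Combining with Corollary~\ref{cor:2} yields $\cuatro(\K) = \alpha$, and the equality $\tres(\K) = \alpha$ is just Theorem~\ref{teo:OSC-H=3}. No genuinely hard step appears here; everything follows mechanically once Proposition~\ref{prop:irr->H4>0} is in hand. The only point that warrants care is the monotonicity remark guaranteeing $\hcuatros(\K) > 0$ for $s \leq \alpha$ from positivity at $s = \alpha$, but this is immediate from the fact that $\dm(A) \leq \dm(\Gamma_n) \to 0$ forces the diameter powers to be eventually $\leq 1$, so $\dm(A)^{s_1} \geq \dm(A)^{s_2}$ when $s_1 \leq s_2$ for all covering elements in the admissible families $\ankf$.
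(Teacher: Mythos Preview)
Your proposal is correct and follows essentially the same approach as the paper: both invoke Corollary~\ref{cor:2} for the upper bound and Proposition~\ref{prop:irr->H4>0} for $\mathcal{H}_4^{\alpha}(\K)>0$, then conclude $\alpha\leq\cuatro(\K)$. You supply more detail on the monotonicity of $s\mapsto\hcuatros(\K)$ to justify that last inequality, whereas the paper simply states it as an immediate consequence of the critical-point definition.
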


\begin{proof}
First, notice that $\cuatro(\K)\leq \tres(\K)=\alpha$ due to Corollary \ref{cor:2}. In addition, Proposition \ref{prop:irr->H4>0} gives $\mathcal{H}_4^{\alpha}(\K)>0$, so $\alpha\leq \cuatro(\K)$. 
\end{proof}
It is worth pointing out that Corollary \ref{cor:irr->4=3} can be understood as a Moran's type theorem for fractal dimensions III and IV. Interestingly,
both Proposition~\ref{prop:4=3->WOSC} and Corollary~\ref{cor:irr->4=3} lead to one of the key results we state in this paper.

\begin{theorem}[IFS]\label{teo:irreducible<-->4=3}
$\ef$ irreducible $\Leftrightarrow \cuatro(\K)=\tres(\K)=\alpha$.
\end{theorem}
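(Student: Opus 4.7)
The plan is essentially to observe that both implications have already been proved in the two immediately preceding results, so the theorem is just their conjunction. First I would invoke Proposition~\ref{prop:4=3->WOSC} to obtain the forward direction $\cuatro(\K)=\tres(\K)=\alpha\Rightarrow \ef$ irreducible. Then I would invoke Corollary~\ref{cor:irr->4=3} for the reverse direction $\ef$ irreducible $\Rightarrow \cuatro(\K)=\tres(\K)=\alpha$. Concatenating these yields the stated equivalence.

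Since the work is distributed across Proposition~\ref{prop:4=3->WOSC} and Proposition~\ref{prop:irr->H4>0} (together with Corollary~\ref{cor:2}, which gives $\cuatro(\K)\leq \tres(\K)=\alpha$ unconditionally), the only thing to double-check is that no hypothesis beyond the standing $\IFS$ conditions~\ref{hypo:1} is used in either direction. In particular, the reverse implication relies on the fact that $\mathcal{H}_4^{\alpha}(\K)>0$ follows from irreducibility (Proposition~\ref{prop:irr->H4>0}), and combined with the upper bound $\cuatro(\K)\leq \alpha$ from Corollary~\ref{cor:2}, this pins down $\cuatro(\K)=\alpha$; the equality $\tres(\K)=\alpha$ is Theorem~\ref{teo:OSC-H=3}, which holds with no separation assumption.

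There is no real obstacle here, since both directions have been established; the statement is essentially a summary. If anything, the only subtle point worth flagging in the write-up would be to remind the reader that $\tres(\K)=\alpha$ is always true under $\IFS$ conditions~\ref{hypo:1} (Theorem~\ref{teo:OSC-H=3}), so the content of the equivalence really concerns the identity $\cuatro(\K)=\alpha$, with irreducibility of $\ef$ being its exact characterization.
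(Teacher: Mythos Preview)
Your proposal is correct and matches the paper's own proof, which simply cites Proposition~\ref{prop:4=3->WOSC} for one implication and Corollary~\ref{cor:irr->4=3} for the other. The additional remarks you make (unpacking the role of Proposition~\ref{prop:irr->H4>0}, Corollary~\ref{cor:2}, and Theorem~\ref{teo:OSC-H=3}) are accurate elaborations of what underlies those cited results.
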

In this way, if $\ef$ is the natural fractal structure on $\K$ as a self-similar set (c.f.~Definition~\ref{def:nfsifs}), then the condition $\cuatro(\K)=\tres(\K)=\alpha$ is equivalent to $\ef$ being irreducible. Thus, it allows to characterize irreducible fractal structures in terms of the equality among the similarity dimension of an $\IFS-$attractor and its fractal dimensions III and IV. 

\section{The level separation property for complete metric spaces}


In this section, we introduce a novel separation condition for each level of the natural fractal structure $\ef$ that any $\IFS-$attractor can be endowed with (c.f.~Definition~\ref{def:nfsifs}). We shall prove that such a separation property is equivalent to $\ef$ being irreducible. Moreover, we show additional equivalences between that separation property and other conditions to describe the structure of self-similar sets, e.g., $\ef$ being a tiling.

\begin{definition}
We shall understand that $\F$ satisfies the so-called level separation property ($\LSP$) if the two following conditions hold for each level of $\ef$:
\begin{equation}
\tag{$\LSPuno$}
\accentset{\circ}{A}\cap \accentset{\circ}{B}=\emptyset, \text{ for all } A,B\in \Gamma_n: A \not=B. 
\end{equation}
\begin{equation}
\tag{$\LSPdos$}
\accentset{\circ}{A} \not=\emptyset, \text{ for each } A \in \Gamma_n,
\end{equation}
where the interiors have been considered in $\K$.
\end{definition}
Thus, $\ef$ is under the $\LSP$ if for each level of the natural fractal structure on $\K$, it holds that the interiors of any two (different) elements are pairwise disjoint with their interiors being nonempty. It is worth pointing out that the $\LSP$ does not depend on an external open set, unlike the $\OSC$.
First, we prove that the $\LSP$ implies $\ef$ being irreducible.

\begin{prop}[IFS]\label{prop:WOSC->irreducible}
$\WSP\Rightarrow \ef$ irreducible.
\end{prop}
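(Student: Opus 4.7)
The plan is a proof by contradiction. Suppose $\ef$ is not irreducible: there exist $n\in\N$ and $A = f_{\mathbf{i}}(\K) \in \Gamma_n$ such that $\Gamma_n \setminus \{A\}$ still covers $\K$. Then in particular $A \subseteq \bigcup_{B \in \Gamma_n,\, B\neq A} B$, and passing to the interior (in $\K$),
\[\accentset{\circ}{A} \subseteq \bigcup_{B \in \Gamma_n,\, B\neq A} (\accentset{\circ}{A} \cap B).\]
By hypothesis $\LSPdos$, $\accentset{\circ}{A} \neq \emptyset$. My goal is to produce $B\neq A$ in $\Gamma_n$ with $\accentset{\circ}{A} \cap \accentset{\circ}{B} \neq \emptyset$, which contradicts $\LSPuno$.

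The idea is to use a Baire category argument in the ambient space $\K$. Since $\K$ is compact (it is the attractor of an $\IFS$ on a complete metric space, c.f.~$\IFS$ conditions~\ref{hypo:1}), every $B \in \Gamma_n$ is compact, hence closed in $\K$. Therefore $\accentset{\circ}{A}\cap B$ is closed in the subspace $\accentset{\circ}{A}$. Moreover, $\accentset{\circ}{A}$ is a nonempty open subspace of a complete metric space, so it is itself a Baire space. The collection $\Gamma_n$ is finite, so $\accentset{\circ}{A}$ is expressed as a finite union of relatively closed sets. The Baire category theorem then yields some $B \in \Gamma_n$ with $B\neq A$ such that $\accentset{\circ}{A} \cap B$ has nonempty interior \emph{inside} $\accentset{\circ}{A}$.

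Since $\accentset{\circ}{A}$ is open in $\K$, the interior taken in $\accentset{\circ}{A}$ coincides with the interior in $\K$. Thus there exists a nonempty open set $U \subseteq \K$ with $U \subseteq \accentset{\circ}{A} \cap B$. As $U$ is open in $\K$ and $U \subseteq B$, one has $U \subseteq \accentset{\circ}{B}$. Therefore
\[\emptyset \neq U \subseteq \accentset{\circ}{A} \cap \accentset{\circ}{B},\]
which contradicts $\LSPuno$. The main technical point is justifying the Baire-category step cleanly, but this is immediate because $\K$ is compact metric and $\Gamma_n$ is finite; no subtle measure-theoretic input is required.
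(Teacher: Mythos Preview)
Your proof is correct and takes a genuinely different route from the paper's. The paper fixes a point $x\in\accentset{\circ}{A}$, uses the \emph{starbase} property of the natural fractal structure to find a level $m\geq n$ with $\St(x,\Gamma_m)\subseteq A$, then picks $B\in\Gamma_n$, $B\neq A$, with $x\in B$ and a piece $C\in\Gamma_m$ with $x\in C\subseteq B$; since also $C\subseteq\St(x,\Gamma_m)\subseteq A$, one gets $\accentset{\circ}{C}\subseteq\accentset{\circ}{A}\cap\accentset{\circ}{B}=\emptyset$, contradicting $\LSPdos$ for $C$. Your argument instead exploits only that $\Gamma_n$ is a \emph{finite} cover by closed sets and that $\accentset{\circ}{A}$ is a nonempty Baire space, obtaining directly some $B\neq A$ with $\accentset{\circ}{A}\cap\accentset{\circ}{B}\neq\emptyset$, which contradicts $\LSPuno$.

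Your approach is more purely topological and does not use the refinement/starbase machinery at all---in fact it would work for any finite covering by closed sets of a compact metric space satisfying the two $\LSP$ axioms. One small remark: invoking the Baire category theorem is slightly heavier than necessary. The statement ``a nonempty space cannot be a finite union of closed sets each with empty interior'' holds in \emph{any} topological space, by an easy induction on the number of sets (if $F_1$ is closed with empty interior then $X\setminus F_1$ is a nonempty open subspace covered by one fewer closed set, each still with empty interior relative to $X\setminus F_1$). So your argument goes through without appealing to completeness or local compactness of $\accentset{\circ}{A}$; the compactness of $\K$ is needed only to ensure each $B\in\Gamma_n$ is closed.
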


\begin{proof}
Assume that $\ef$ is not irreducible. 
Then there exist $n \in \N$ and $A\in \Gamma_n$ such that $A\subseteq \cup\{B\in \Gamma_n:B\neq A\}$. Let us denote $\beta=\{B\in \Gamma_n:B\neq A\}$. 
By $\LSPdos$, there exists $x \in A^{\circ}$. Since $\ef$ is starbase, there exists $m \geq n$ with $\St(x,\Gamma_m) \subseteq A$. Since $x \in A \subseteq \bigcup_{B \in \beta} B$, there exists $B \in \Gamma_n$ with $B \not=A$ and $x \in B$. By $\LSPuno$, we have $\inte{A}\cap \inte{B}=\emptyset$. Then there exists $C \in \Gamma_m$ such that $x \in C \subseteq B$, and hence $\inte{C} \subseteq \inte{B}$. On the other hand, $C \subseteq \St(x,\Gamma_m) \subseteq A$. Therefore $\inte{C} \subseteq \inte{B} \cap \inte{A}=\emptyset$, which is a contradiction with $\LSPdos$. We conclude that $\ef$ is irreducible.

\end{proof}

The reciprocal of Proposition \ref{prop:WOSC->irreducible} is also true.

\begin{prop}[IFS]\label{prop:irreducible->WOSC}
$\ef$ irreducible $\Rightarrow \WSP$.
\end{prop}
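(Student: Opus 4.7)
My plan is to establish $\LSPuno$ and $\LSPdos$ separately, each by contradiction, exploiting irreducibility of the natural fractal structure $\ef$ on $\K$ together with the fact that $\ef$ is starbase (c.f.~\cite[Theorem 4.7]{Arenas2011}).

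For $\LSPdos$, I would fix $n\in \N$ and $A\in \Gamma_n$ and assume that $\inte{A}=\emptyset$ in $\K$. The family $\beta=\{B\in \Gamma_n:B\neq A\}$ consists of finitely many compact, hence closed, subsets of $\K$, so $B_{*}=\cup \beta$ is closed in $\K$. Since $\Gamma_n$ is a covering, $\K\setminus A\subseteq B_{*}$. The assumption $\inte{A}=\emptyset$ means precisely that $\K\setminus A$ is dense in $\K$, hence $A\subseteq \overline{\K\setminus A}\subseteq B_{*}$. Thus $\Gamma_n\setminus \{A\}$ is a proper subcovering of $\K$, contradicting the irreducibility of $\Gamma_n$.

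For $\LSPuno$, which I expect to be the more delicate step, I would argue as follows. Suppose that $A=f_{\mathbf{i}}(\K)$ and $B=f_{\mathbf{j}}(\K)$ are different elements of $\Gamma_n$ with $\inte{A}\cap \inte{B}\neq \emptyset$, and choose $x$ in that intersection together with an open neighborhood $U$ of $x$ in $\K$ satisfying $U\subseteq A\cap B$. Since $\ef$ is starbase, there exists $m\geq n$ with $\St(x,\Gamma_m)\subseteq U$, so every element of $\Gamma_m$ containing $x$ is contained in $A\cap B$. Using $A=\cup\{f_{\mathbf{k}}(\K):\mathbf{k}\in \Sigma^m,\mathbf{i}\sqsubseteq \mathbf{k}\}$, I can pick such an $\mathbf{k}$ with $x\in f_{\mathbf{k}}(\K)$, and hence $f_{\mathbf{k}}(\K)\subseteq A\cap B\subseteq B=f_{\mathbf{j}}(\K)$. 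The last set decomposes as $\cup\{f_{\mathbf{l}}(\K):\mathbf{l}\in \Sigma^m,\mathbf{j}\sqsubseteq \mathbf{l}\}$, and since $\mathbf{i},\mathbf{j}\in \Sigma^n$ differ while $\mathbf{i}\sqsubseteq \mathbf{k}$, no such $\mathbf{l}$ can equal $\mathbf{k}$. Consequently
\[f_{\mathbf{k}}(\K)\subseteq \cup\{f_{\mathbf{l}}(\K):\mathbf{l}\in \Sigma^m,\mathbf{l}\neq \mathbf{k}\},\]
so $\Gamma_m\setminus \{f_{\mathbf{k}}(\K)\}$ still covers $\K$, contradicting irreducibility at level $m$.

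The main obstacle is the $\LSPuno$ step: the hypothesis only supplies an open set $U$ inside $A\cap B$ in $\K$, and to turn this into a \emph{whole piece} of some $\Gamma_m$ sitting inside $A\cap B$, the starbase character of $\ef$ is essential. Once a piece $f_{\mathbf{k}}(\K)$ is trapped inside $B$ with $\mathbf{k}$ beginning with $\mathbf{i}$ (not with $\mathbf{j}$), refining $B$ at level $m$ through the prefix order on $\Sigma^{\infty}$ automatically avoids $\mathbf{k}$, and the contradiction with irreducibility of $\Gamma_m$ follows. The $\LSPdos$ step, by contrast, is a direct density/closure argument and poses no real difficulty.
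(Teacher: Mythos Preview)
Your proposal is correct and follows essentially the same route as the paper: for $\LSPdos$ you use the contrapositive of the paper's direct argument (irreducibility of $\Gamma_n$ forces $\K\setminus\cup_{B\neq A}B$ to be a nonempty open subset of $A$), and for $\LSPuno$ your argument coincides with the paper's almost verbatim, using the starbase property to trap a level-$m$ piece $f_{\mathbf{k}}(\K)$ with $\mathbf{i}\sqsubseteq\mathbf{k}$ inside $B=f_{\mathbf{j}}(\K)$ and then refining $B$ along the prefix $\mathbf{j}$ to contradict irreducibility of $\Gamma_m$.
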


\begin{proof}
Let $\ef$ be an irreducible fractal structure. 
\begin{itemize}
\item First, we check that $\WSP2$ holds. To deal with, let $n \in \N$ and $A \in \Gamma_n$. Since $\Gamma_n$ is an irreducible covering, then it follows that $\cup_{B \in \Gamma_n, B \not=A} B \not=\mathcal{K}$. Thus, $\mathcal{K}\setminus \cup_{B \in \Gamma_n, B \not=A} B$ is a nonempty open set contained in $A$ and hence, $\accentset{\circ}{A}\neq\emptyset$.
\item Next step is to prove $\WSP1$. Assume, by the contrary, that $\F$ does not satisfy $\LSPuno$. Then there exist a level $n\in \N$, $x \in X$ and elements $A,B\in \Gamma_n:A\neq B$ such that $x\in \accentset{\circ}{A}\cap \accentset{\circ}{B}$. Moreover, since $\ef$ is starbase, then there exists $m>n$ such that $\St(x,\Gamma_m)\subseteq \accentset{\circ}{A}\cap \accentset{\circ}{B}$. We can write $A=f_{\mathbf{i}}(\K), B=f_{\mathbf{j}}(\K):\mathbf{i},\mathbf{j}\in \Sigma^n, \mathbf{i}\neq \mathbf{j}$. Let $C\in \Gamma_m$ such that $C=f_{\mathbf{l}}(\K)$, where $\mathbf{i}\sqsubseteq \mathbf{l}$ and $x \in C$, so $\mathbf{j}\not\sqsubseteq \mathbf{l}$, then $C\subseteq \St(x,\Gamma_m)\subseteq \accentset{\circ}{A}\cap \accentset{\circ}{B}\subseteq A\cap B$. Since $C \subseteq B=\bigcup_{k \in \Sigma^{m-n}}f_{\mathbf{j}\mathbf{k}}(\K)$, then $\Gamma_m \setminus \{C\}$ is a cover of $\K$, so $\Gamma_n$ is not irreducible, a contradiction.
\end{itemize}
\end{proof}
 
Both Propositions~\ref{prop:WOSC->irreducible} and \ref{prop:irreducible->WOSC} yield the following equivalence between the $\WSP$ and $\ef$ being an irreducible fractal structure.

\begin{theorem}[IFS]\label{prop:WOSC<-->irr}
$\WSP\Leftrightarrow \ef$ irreducible.
\end{theorem}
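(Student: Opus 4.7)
The plan is to recognize the theorem as nothing more than the conjunction of the two preceding results: the forward implication ``$\WSP \Rightarrow \ef$ irreducible'' is precisely Proposition~\ref{prop:WOSC->irreducible}, while the reverse implication ``$\ef$ irreducible $\Rightarrow \WSP$'' is Proposition~\ref{prop:irreducible->WOSC}. So the proof amounts to citing these two results, yielding the stated equivalence in a single line.

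Since all the real work has been done upstream, there is no genuinely new obstacle at this step. Still, it is worth noting where the substance of the equivalence lives. In the direction ``$\WSP \Rightarrow$ irreducible'', one argues by contradiction: if some $A \in \Gamma_n$ could be dropped from the covering, an interior point $x$ of $A$ (which exists by $\LSPdos$) would lie in some other $B \in \Gamma_n$, and the starbase property of the natural fractal structure supplies a level $m \geq n$ with $\St(x,\Gamma_m) \subseteq A$; this forces some refined piece $C \in \Gamma_m$ with $x \in C \subseteq A \cap B$, whose nonempty interior then violates $\LSPuno$.

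For the converse, $\LSPdos$ is immediate, since if $\Gamma_n$ is irreducible then $\K \setminus \bigcup_{B \in \Gamma_n,\, B \neq A} B$ is a nonempty open subset of $A$. For $\LSPuno$, one invokes the starbase property again: a hypothetical common interior point $x$ of two distinct elements $A = f_{\mathbf{i}}(\K)$ and $B = f_{\mathbf{j}}(\K)$ in $\Gamma_n$ would yield a descendant $C = f_{\mathbf{l}}(\K) \in \Gamma_m$ with $\mathbf{i} \sqsubseteq \mathbf{l}$, $\mathbf{j} \not\sqsubseteq \mathbf{l}$, and $C \subseteq \St(x,\Gamma_m) \subseteq A \cap B$; then $C$ can be removed from $\Gamma_m$ without losing coverage of $\K$, because the $\mathbf{j}$-descendants in $\Gamma_m$ already cover $B \supseteq C$, contradicting irreducibility of $\Gamma_m$.

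The only delicate point in the whole argument, and the one I would expect to be the main obstacle if one had to rebuild it from scratch, is orchestrating the starbase refinement so that the chosen piece $C$ is genuinely trapped inside the problematic intersection while still being removable from its level. Once that combinatorial-topological step is in hand, the two implications glue together, and the theorem follows by simply quoting Propositions~\ref{prop:WOSC->irreducible} and \ref{prop:irreducible->WOSC}.
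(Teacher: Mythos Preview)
Your proposal is correct and matches the paper's approach exactly: the theorem is stated immediately after Propositions~\ref{prop:WOSC->irreducible} and~\ref{prop:irreducible->WOSC} with the remark that both propositions together yield the equivalence, and no further proof is given. Your additional recap of how those two propositions work is accurate but goes beyond what the paper writes at this point.
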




An additional equivalence regarding the $\LSP$ 
is shown next.

\begin{prop}[IFS]\label{prop: irr-->nuevo}
$\ef$ irreducible $\Rightarrow$ $\F$ satisfies $\WSP2$ and 
$A_{\mathbf{i}} \subseteq A_{\mathbf{j}}$ implies $\mathbf{j} \sqsubseteq \mathbf{i}$.
\end{prop}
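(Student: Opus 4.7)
The statement splits into two independent assertions, which I would handle separately.

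The $\LSPdos$ half is essentially already contained in the second bullet of Proposition~\ref{prop:irreducible->WOSC}, so I would simply rerun that step: fix $n\in \N$ and $A\in \Gamma_n$; since $\Gamma_n$ is an irreducible covering, $\bigcup\{B\in \Gamma_n:B\neq A\}\neq \K$, so $\K\setminus \bigcup\{B\in \Gamma_n:B\neq A\}$ is a nonempty open subset of $\K$ contained in $A$, and therefore $\accentset{\circ}{A}\neq \emptyset$.

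For the second assertion I would argue by contrapositive, splitting on the relative lengths of $\mathbf{i}$ and $\mathbf{j}$. Assume $\mathbf{j}\not\sqsubseteq \mathbf{i}$ and set $m=|\mathbf{i}|$, $\ell=|\mathbf{j}|$. If $\ell\leq m$, take $n=m$ and write $A_{\mathbf{j}}=\bigcup_{\mathbf{k}\in \Sigma^{n-\ell}} A_{\mathbf{j}\mathbf{k}}$; because $\mathbf{j}$ is not a prefix of $\mathbf{i}$, every concatenation $\mathbf{j}\mathbf{k}$ differs from $\mathbf{i}$, so if $A_{\mathbf{i}}\subseteq A_{\mathbf{j}}$ were to hold then the $\mathbf{i}$-indexed entry of $\Gamma_n$ would be completely covered by the other entries $A_{\mathbf{j}\mathbf{k}}\in \Gamma_n$, and $\Gamma_n$ minus that single entry would still be a cover of $\K$, contradicting irreducibility. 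If instead $\ell>m$, take $n=\ell$; since $k\geq 2$ and $n-m\geq 1$, the set $\Sigma^{n-m}$ has at least two elements, so there exists $\mathbf{k}\in \Sigma^{n-m}$ with $\mathbf{i}\mathbf{k}\neq \mathbf{j}$, and from $A_{\mathbf{i}}\subseteq A_{\mathbf{j}}$ we get $A_{\mathbf{i}\mathbf{k}}\subseteq A_{\mathbf{i}}\subseteq A_{\mathbf{j}}$, so the $\mathbf{i}\mathbf{k}$-entry of $\Gamma_n$ is absorbed by the different $\mathbf{j}$-entry and $\Gamma_n$ admits a proper subcovering, again a contradiction.

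The main subtlety is the bookkeeping that $\Gamma_n$ is an indexed family: in principle two distinct addresses in $\Sigma^n$ could yield the same set, so the word \emph{remove} must be read as removing a single entry of the family rather than a whole equivalence class of entries. Remark~\ref{obs:incomp} and the fact that each level is indexed by $\Sigma^n$ make this routine; no appeal to the starbase property of $\ef$, to the metric, or to any separation of the pre-fractals is needed, since all the geometric content is already exhausted by the single-level irreducibility.
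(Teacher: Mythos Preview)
Your proof is correct and follows essentially the same strategy as the paper: the $\LSPdos$ half is handled by the very same argument (it is the \emph{first} bullet of the proof of Proposition~\ref{prop:irreducible->WOSC}, not the second, but you reproduce the correct step), and the implication $A_{\mathbf{i}}\subseteq A_{\mathbf{j}}\Rightarrow \mathbf{j}\sqsubseteq \mathbf{i}$ is proved by splitting on the relative lengths of $\mathbf{i}$ and $\mathbf{j}$ and contradicting irreducibility at a single level.

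The one genuine difference is in the case $|\mathbf{j}|>|\mathbf{i}|$. The paper extends $\mathbf{i}$ to some $\mathbf{k}\in\Sigma^{|\mathbf{j}|}$, uses irreducibility of $\Gamma_{|\mathbf{j}|}$ to force $\mathbf{k}=\mathbf{j}$, and then argues that $A_{\mathbf{i}}=A_{\mathbf{j}}$ with $n<m$ leads to a further contradiction. You instead exploit $k\geq 2$ to pick an extension $\mathbf{i}\mathbf{k}\neq \mathbf{j}$ up front, so that $A_{\mathbf{i}\mathbf{k}}\subseteq A_{\mathbf{j}}$ immediately exhibits a redundant entry in $\Gamma_{|\mathbf{j}|}$. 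Your route is a bit more direct and avoids the second step of the paper's argument; both rely only on single-level irreducibility and the indexed-family bookkeeping you flag, with no need for the starbase property.
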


\begin{proof}
Suppose that $\ef$ is irreducible. Then Proposition \ref{prop:irreducible->WOSC} guarantees $\WSP2$, so $\accentset{\circ}{A} \not=\emptyset$, for each $A \in \Gamma_n$ and all $n \in \N$.
In addition, let $\mathbf{i} \in \Sigma^n, \mathbf{j} \in \Sigma^m$ be such that $A_{\mathbf{i}}\subseteq A_{\mathbf{j}}$. Then one of the two following cases occurs.
\begin{enumerate}[(i)]
\item Assume that $m>n$. Then there exists $\mathbf{k} \in \Sigma^m$ such that $\mathbf{i} \sqsubseteq \mathbf{k}$ and $A_{\mathbf{k}} \subseteq A_{\mathbf{i}}$. Thus, $A_{\mathbf{k}} \subseteq A_{\mathbf{j}}$. Further, since $\Gamma_m$ is irreducible, then $\mathbf{k}=\mathbf{j}$. Hence, $\mathbf{i} \sqsubseteq \mathbf{j}$ and $A_{\mathbf{j}} \subseteq A_{\mathbf{i}}$. Therefore, $\mathbf{i} \sqsubseteq \mathbf{j}$ and $A_{\mathbf{j}} = A_{\mathbf{i}}$. But this is only possible if $n=m$, a contradiction.
\item Suppose that $m \leq n$. Then $A_{\mathbf{i}}\subseteq A_{\mathbf{j}}=\cup_{\mathbf{k}\in \Sigma^n,\, \mathbf{j} \sqsubseteq \mathbf{k}} A_{\mathbf{k}}$. Since $\Gamma_n$ is irreducible, then $\mathbf{i} = \mathbf{k}$ for some $\mathbf{k} \in \Sigma^n$ with  $\mathbf{j} \sqsubseteq \mathbf{k}$, and hence, $\mathbf{j} \sqsubseteq \mathbf{i}$.
\end{enumerate}
\end{proof}

\begin{prop}[IFS] \label{prop: nuevo-->irr}
If $\F$ is under $\WSP2$ and 
the condition 
$A_{\mathbf{i}} \subseteq A_{\mathbf{j}}$ implies $\mathbf{j} \sqsubseteq \mathbf{i}$, then $\ef$ is irreducible.
\end{prop}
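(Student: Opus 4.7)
The plan is a contradiction argument: assume some level $\Gamma_n$ fails to be irreducible and produce a piece at a deeper level sitting inside $A_{\mathbf{i}}$ whose index does not extend $\mathbf{i}$, which will violate the prefix hypothesis.

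Concretely, suppose there is $\mathbf{i}\in\Sigma^n$ with $A_{\mathbf{i}}\subseteq \bigcup_{\mathbf{j}\in\Sigma^n\setminus\{\mathbf{i}\}} A_{\mathbf{j}}$. By $\LSPdos$, the interior $\accentset{\circ}{A_{\mathbf{i}}}$ (in $\K$) is nonempty, so I pick $x\in\accentset{\circ}{A_{\mathbf{i}}}$. Since the natural fractal structure on an $\IFS-$attractor is starbase (cited earlier in the paper), the stars $\{\St(x,\Gamma_m)\}_{m\in\N}$ form a neighborhood base at $x$; hence I can choose $m\geq n$ large enough so that $\St(x,\Gamma_m)\subseteq A_{\mathbf{i}}$.

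Next, because $x\in A_{\mathbf{i}}$ is covered by the remaining pieces of $\Gamma_n$, there exists $\mathbf{j}\in\Sigma^n$, $\mathbf{j}\neq\mathbf{i}$, with $x\in A_{\mathbf{j}}$. Writing $A_{\mathbf{j}}=\bigcup_{\mathbf{k}\in\Sigma^{m-n}}A_{\mathbf{j}\mathbf{k}}$, I find $\mathbf{k}\in\Sigma^{m-n}$ with $x\in A_{\mathbf{j}\mathbf{k}}\in\Gamma_m$. By the choice of $m$, this forces $A_{\mathbf{j}\mathbf{k}}\subseteq \St(x,\Gamma_m)\subseteq A_{\mathbf{i}}$. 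Applying the hypothesis on inclusions yields $\mathbf{i}\sqsubseteq \mathbf{j}\mathbf{k}$, but the length-$n$ prefix of $\mathbf{j}\mathbf{k}$ is exactly $\mathbf{j}$, so $\mathbf{i}=\mathbf{j}$, contradicting $\mathbf{j}\neq\mathbf{i}$. Therefore every $\Gamma_n$ is irreducible and $\ef$ is irreducible.

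The main obstacle (and the only place where something nontrivial is invoked) is the passage from an interior point of $A_{\mathbf{i}}$ in $\K$ to a whole element of a deeper level $\Gamma_m$ lying in $A_{\mathbf{i}}$. This is precisely what the starbase property of the natural fractal structure gives for free; without starbase, one would have to work harder to localize pieces inside the $\K$-open neighborhood of $x$. Beyond that, the argument is a routine bookkeeping with prefixes and the refinement $\Gamma_m\prec\Gamma_n$.
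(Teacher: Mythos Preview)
Your argument is correct and follows essentially the same route as the paper's proof: assume a level fails to be irreducible, use $\LSPdos$ to pick an interior point of the redundant piece $A_{\mathbf{i}}$, invoke the starbase property to trap a level-$m$ star inside $A_{\mathbf{i}}$, and then extract a level-$m$ piece with prefix $\mathbf{j}\neq\mathbf{i}$ whose inclusion in $A_{\mathbf{i}}$ contradicts the prefix hypothesis. The only cosmetic difference is that the paper writes the deep piece as $A_{\mathbf{k}}$ with $\mathbf{k}\in\Sigma^m$ and $\mathbf{j}\sqsubseteq\mathbf{k}$, whereas you write it as $A_{\mathbf{j}\mathbf{k}}$ with $\mathbf{k}\in\Sigma^{m-n}$.
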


\begin{proof}
Suppose that 
$\F$ is under $\LSPdos$ and the condition $A_{\mathbf{i}} \subseteq A_{\mathbf{j}}$ implies $\mathbf{j} \sqsubseteq \mathbf{i}$.
Assume, in addition, that $\ef$ is not irreducible. Then there exists $n \in \N$ and $\mathbf{i} \in \Sigma^n$ such that $A_{\mathbf{i}} \subseteq \cup_{\mathbf{j} \in \Sigma^n,\ \mathbf{j} \not=\mathbf{i}} A_{\mathbf{j}}$.

By $\LSPdos$, there exists $x \in \inte{A_{\mathbf{i}}}$. Since $\ef$ is starbase, there exists $m>n$ such that $\St(x,\Gamma_m) \subseteq A_{\mathbf{i}}$. On the other hand, since $x \in A_{\mathbf{i}} \subseteq \cup_{\mathbf{j} \in \Sigma^n,\ \mathbf{j} \not=\mathbf{i}} A_{\mathbf{j}}$, there exists $\mathbf{j} \in \Sigma^n$  with $\mathbf{j} \not=\mathbf{i}$ and $x \in A_{\mathbf{j}}$. Let $\mathbf{k} \in \Sigma^m$ be such that $\mathbf{j} \sqsubseteq \mathbf{k}$ and $x \in A_{\mathbf{k}}$.

It follows that $x \in A_{\mathbf{k}} \subseteq \St(x,\Gamma_m) \subseteq A_{\mathbf{i}}$, so $A_{\mathbf{k}} \subseteq A_{\mathbf{i}}$ and by hypothesis $\mathbf{i} \sqsubseteq \mathbf{k}$, which is a contradiction, since $\mathbf{j} \sqsubseteq \mathbf{k}$ and $\mathbf{j}, \mathbf{i} \in \Sigma^n$ with $\mathbf{i} \not= \mathbf{j}$.

\end{proof}

Both Propositions \ref{prop: irr-->nuevo} and \ref{prop: nuevo-->irr} give the following equivalence.

\begin{theorem}[IFS] \label{prop: irr<-->nuevo}
$\ef$ irreducible $\Leftrightarrow$ $\F$ is under $\WSP2$ and 
$A_{\mathbf{i}} \subseteq A_{\mathbf{j}}$ implies $\mathbf{j} \sqsubseteq \mathbf{i}$.
\end{theorem}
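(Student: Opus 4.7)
The plan is straightforward: the theorem is a direct consequence of Propositions~\ref{prop: irr-->nuevo} and~\ref{prop: nuevo-->irr}, which have just been established and together supply the two implications of the claimed biconditional. More precisely, the forward implication ``$\ef$ irreducible $\Rightarrow$ $\F$ satisfies $\LSPdos$ together with the prefix condition $A_{\mathbf{i}}\subseteq A_{\mathbf{j}} \Rightarrow \mathbf{j}\sqsubseteq \mathbf{i}$'' is exactly the content of Proposition~\ref{prop: irr-->nuevo}, while the reverse implication ``$\F$ under $\LSPdos$ together with the same prefix condition $\Rightarrow$ $\ef$ irreducible'' is the content of Proposition~\ref{prop: nuevo-->irr}. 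Chaining the two statements yields the full equivalence in a single line, with no additional computation required.

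Because both halves have been proved in the immediately preceding propositions, there is no genuine technical obstacle to anticipate at this stage. The substantive work sits inside the two component propositions: Proposition~\ref{prop: irr-->nuevo} uses the irreducibility of each $\Gamma_m$ together with $\LSPdos$ to split on whether $m>n$ or $m\leq n$ and force any inclusion $A_{\mathbf{i}}\subseteq A_{\mathbf{j}}$ to reflect the prefix order; Proposition~\ref{prop: nuevo-->irr} uses the starbase property of the natural fractal structure, picks a point $x\in \inte{A_{\mathbf{i}}}$ provided by $\LSPdos$, refines to a star $\St(x,\Gamma_m)\subseteq A_{\mathbf{i}}$, and applies the prefix hypothesis to derive a contradiction from assumed reducibility. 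The present theorem is therefore best understood as the packaging of those two results into a single, convenient characterization of irreducibility purely in terms of interior non-emptiness at every level and the compatibility between set inclusion and the prefix ordering on the word space $\Sigma^{\infty}$.
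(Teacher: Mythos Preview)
Your proposal is correct and matches the paper's own treatment exactly: the theorem is stated as an immediate consequence of Propositions~\ref{prop: irr-->nuevo} and~\ref{prop: nuevo-->irr}, with no additional argument given. Your summary of the content of those two propositions is also accurate.
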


We also provide a further characterization for $\ef$ being irreducible. To deal with, first we recall when a covering is said to be a tiling.

\begin{definition}(c.f.~\cite[Section 2]{MR1881446})\label{def:tiling}
Let $\Gamma$ be a covering of $X$. We shall understand that $\Gamma$ is a tiling provided that all the elements of $\Gamma$ have disjoint interiors and are regularly closed, i.e., $\overline{A^{\circ}}=A$ for each $A\in \Gamma$. In particular, a fractal structure $\ef$ is called a tiling if each level $\Gamma_n$ of $\ef$ is a tiling itself.
\end{definition}
From Definition~\ref{def:tiling}, it is clear that $\ef$ tiling $\Rightarrow \LSP$. The following result deals with its reciprocal.   


\begin{theorem}[IFS]\label{teo:irreducible<-->tiling}
$\LSP$ $\Leftrightarrow$ $\ef$ tiling.
\end{theorem}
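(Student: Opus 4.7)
The plan is to handle the two implications separately, with the easy direction essentially an unpacking of definitions and the genuine work concentrated in the converse.

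The forward direction, $\ef$ tiling $\Rightarrow \LSP$, follows immediately from Definition~\ref{def:tiling}. The disjoint-interiors clause for a tiling yields $\LSPuno$ verbatim. Moreover every $A=f_{\mathbf{i}}(\K)\in\Gamma_n$ is a nonempty compact set; if it is regularly closed then $A=\overline{A^{\circ}}$ forces $A^{\circ}\neq\emptyset$, which is $\LSPdos$.

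For the converse, $\LSP\Rightarrow \ef$ tiling, the disjoint-interiors requirement of a tiling is just $\LSPuno$, so the only real task is to show that each $A\in\Gamma_n$ is regularly closed, i.e., $\overline{A^{\circ}}=A$. The inclusion $\overline{A^{\circ}}\subseteq A$ is automatic since $A$ is closed in $\K$. For the reverse inclusion I would write $A=f_{\mathbf{i}}(\K)$ with $\mathbf{i}\in\Sigma^n$, fix $x\in A$ and an arbitrary neighborhood $U$ of $x$ in $\K$, and produce a point of $A^{\circ}$ inside $U$. Using that the natural fractal structure $\ef$ is starbase, choose $m\geq n$ with $\St(x,\Gamma_m)\subseteq U$. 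Because $x\in A=f_{\mathbf{i}}(\K)$, there exists $\mathbf{k}\in\Sigma^m$ with $\mathbf{i}\sqsubseteq \mathbf{k}$ and $x\in A_{\mathbf{k}}$. Applying $\LSPdos$ at level $m$ provides $y\in A_{\mathbf{k}}^{\circ}$. Since $\mathbf{i}\sqsubseteq \mathbf{k}$ we have $A_{\mathbf{k}}\subseteq A$, so the open-in-$\K$ set $A_{\mathbf{k}}^{\circ}$ sits inside $A$; hence $A_{\mathbf{k}}^{\circ}\subseteq A^{\circ}$ and in particular $y\in A^{\circ}$. Simultaneously $y\in A_{\mathbf{k}}\subseteq \St(x,\Gamma_m)\subseteq U$, so $y\in U\cap A^{\circ}$, proving $x\in\overline{A^{\circ}}$.

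I do not expect any real obstacle here: the argument combines the starbase property, which lets us refine to a deeper level inside an arbitrary neighborhood, with the nonempty-interior guarantee supplied by $\LSPdos$. The only point deserving a moment of care is the passage $A_{\mathbf{k}}^{\circ}\subseteq A^{\circ}$: since interiors are taken in $\K$, any open-in-$\K$ subset of $A$ is automatically contained in $A^{\circ}$, and this is precisely how deep-level information about nonempty interiors gets transferred back to level $n$ at the correct spatial location.
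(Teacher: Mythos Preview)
Your proof is correct and follows essentially the same approach as the paper: both arguments use the starbase property to find a deep-level piece $A_{\mathbf{k}}\subseteq A$ containing $x$ inside a small neighborhood, then invoke $\LSPdos$ to obtain a nonempty interior that transfers up to $A^{\circ}$. The only cosmetic difference is that the paper phrases the regularly-closed step as a proof by contradiction (assuming $x\in A\setminus\overline{A^{\circ}}$ and deriving $A_{\mathbf{k}}^{\circ}=\emptyset$), whereas you give the equivalent direct argument.
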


\begin{proof}~
\begin{enumerate}
\item [$(\Leftarrow)$] Obvious.
\item [$(\Rightarrow)$] 
Suppose that there exist $n \in \N$, $A \in \Gamma_n$ and $x \in A \setminus \overline{A^{\circ}}$. Since $\ef$ is starbase, then there exists $m \in \N$ with $m \geq n$ and such that $\St(x,\Gamma_m) \cap A^{\circ}=\emptyset$. Let $B \in \Gamma_m$ be such that $x \in B \subseteq A$. Then $B \subseteq A \setminus A^{\circ}=\Fr(A)$, and hence $B^{\circ}=\emptyset$, a contradiction with $\LSPdos$. Therefore, $A=\overline{A^{\circ}}$, i.e., $A$ is regularly closed. 
\end{enumerate}
\end{proof}

\begin{corollary}[IFS]\label{cor:irr<-->tiling}
$\ef$ irreducible $\Leftrightarrow$ $\ef$ tiling.
\end{corollary}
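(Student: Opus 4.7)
The plan is to chain together two equivalences already established earlier in the excerpt. By Theorem \ref{prop:WOSC<-->irr}, we have $\ef$ irreducible $\Leftrightarrow \LSP$. By Theorem \ref{teo:irreducible<-->tiling}, we have $\LSP \Leftrightarrow \ef$ tiling. Transitivity of the biconditional then yields the desired equivalence $\ef$ irreducible $\Leftrightarrow \ef$ tiling.

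Concretely, I would first recall that Theorem \ref{prop:WOSC<-->irr} provides the equivalence between irreducibility of the natural fractal structure and the level separation property $\LSP$, which is the conjunction $\LSPuno \wedge \LSPdos$. Next, I would invoke Theorem \ref{teo:irreducible<-->tiling} to replace $\LSP$ by the property that $\ef$ is a tiling, i.e., that each level $\Gamma_n$ consists of regularly closed sets with pairwise disjoint interiors.

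There is no real obstacle here: both directions of the corollary follow from directly composing the two previously proved biconditionals, and no additional properties of $\IFS$-attractors or of the natural fractal structure need to be invoked at this stage. The only point worth double-checking is that the two theorems being chained are stated in exactly the same setting (the IFS conditions \ref{hypo:1} with $\ef$ the natural fractal structure on $\K$), which is the case throughout Section on irreducible fractal structures and the level separation property. Thus the proof reduces to a one-line composition of the two equivalences.
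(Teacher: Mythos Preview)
Your proposal is correct and matches the paper's approach: the corollary is stated without an explicit proof precisely because it follows immediately by composing Theorem~\ref{prop:WOSC<-->irr} ($\ef$ irreducible $\Leftrightarrow \LSP$) with Theorem~\ref{teo:irreducible<-->tiling} ($\LSP \Leftrightarrow \ef$ tiling), exactly as you describe.
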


All the results obtained so far are collected in the next result.

\begin{theorem}[IFS]\label{teo:all}
The following statements are equivalent: 
\begin{enumerate}[(i)]
\item $\ef$ irreducible.\label{item:1}
\item $\cuatro(\K)=\tres(\K)=\alpha$.\label{item:2}
\item $\WSP$.\label{item:3}
\item $\WSP2$ and $A_{\mathbf{i}} \subseteq A_{\mathbf{j}}$ implies $\mathbf{j} \sqsubseteq \mathbf{i}$.\label{item:4}
\item $\ef$ tiling.\label{item:5}
\item $\mathcal{H}_4^{\alpha}(\K)>0$.\label{item:6}
\end{enumerate}
\end{theorem}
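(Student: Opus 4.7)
The proof is essentially a bookkeeping exercise: almost every pairwise equivalence has already been established in the preceding sections, so the plan is simply to chain them correctly through a common pivot and then close the loop by handling the one new item, namely (vi).

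My plan is to use (i) as the central hub. The equivalence (i) $\Leftrightarrow$ (ii) is immediate from Theorem \ref{teo:irreducible<-->4=3}. The equivalence (i) $\Leftrightarrow$ (iii) is Theorem \ref{prop:WOSC<-->irr}. The equivalence (i) $\Leftrightarrow$ (iv) is Theorem \ref{prop: irr<-->nuevo}. Finally, (i) $\Leftrightarrow$ (v) is Corollary \ref{cor:irr<-->tiling}. So I would begin by listing these four citations as a short opening paragraph, making clear that items (ii), (iii), (iv) and (v) all reduce to (i).

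The only genuinely new content concerns (vi). The forward implication (i) $\Rightarrow$ (vi) is exactly Proposition \ref{prop:irr->H4>0}, so nothing needs to be done there. For the reverse direction I plan to close the cycle through (ii) rather than directly through (i): if $\mathcal{H}_4^{\alpha}(\K)>0$, then by the very definition of fractal dimension IV (Definition \ref{def:1}) one obtains $\alpha\leq \cuatro(\K)$, and combined with the general inequality $\cuatro(\K)\leq \tres(\K)=\alpha$ furnished by Corollary \ref{cor:2}, this yields $\cuatro(\K)=\tres(\K)=\alpha$, which is (ii). Then the previously established equivalence (ii) $\Leftrightarrow$ (i) brings us back to the hub.

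I do not anticipate any serious obstacle here; the only care needed is the direction (vi) $\Rightarrow$ (ii), which relies on the critical-value characterization of $\cuatro$ and not on any separation hypothesis. In particular, the hypothesis $\dm(\K,\Gamma_n)\to 0$ needed to apply Corollary \ref{cor:2} is automatic under $\IFS$ conditions \ref{hypo:1} by Remark \ref{obs:1}, so no ad-hoc verification is required. The final write-up therefore amounts to displaying the cycle
\[\text{(i)}\Leftrightarrow\text{(ii)}\Leftrightarrow\text{(iii)}\Leftrightarrow\text{(iv)}\Leftrightarrow\text{(v)},\qquad \text{(i)}\Rightarrow\text{(vi)}\Rightarrow\text{(ii)},\]
with each arrow tagged by the corresponding previous result, and the single two-line argument for $\text{(vi)}\Rightarrow\text{(ii)}$ spelled out.
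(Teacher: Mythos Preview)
Your proposal is correct and follows essentially the same route as the paper: both use (i) as the hub, cite the same four earlier results for the equivalences with (ii)--(v), invoke Proposition~\ref{prop:irr->H4>0} for (i)$\Rightarrow$(vi), and close the loop via (vi)$\Rightarrow$(ii) using the critical-point definition of $\cuatro$ together with Corollary~\ref{cor:2}. The only cosmetic difference is that the paper attributes the link with (v) as (iii)$\Leftrightarrow$(v) while you state it as (i)$\Leftrightarrow$(v); since Corollary~\ref{cor:irr<-->tiling} literally says (i)$\Leftrightarrow$(v), your labeling is in fact the more precise one.
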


\begin{proof}
Just notice that $(\ref{item:1})\Leftrightarrow (\ref{item:2})$ is due to Theorem~\ref{teo:irreducible<-->4=3}, $(\ref{item:1})\Leftrightarrow (\ref{item:3})$ holds by Theorem~\ref{prop:WOSC<-->irr}, $(\ref{item:1})\Leftrightarrow (\ref{item:4})$ follows from Theorem~\ref{prop: irr<-->nuevo}, $(\ref{item:1})\Rightarrow (\ref{item:6})$ follows from Proposition \ref{prop:irr->H4>0}, and $(\ref{item:3})\Leftrightarrow (\ref{item:5})$ is a consequence of Corollary~\ref{cor:irr<-->tiling}. 
Thus, we shall prove that $(\ref{item:6})\Rightarrow (\ref{item:2})$ to conclude the proof. In fact, $\mathcal{H}_4^{\alpha}(\K)>0$ leads to $\alpha\leq \cuatro(\K)$. Hence, $\cuatro(\K)=\tres(\K)=\alpha$ due to Corollary~\ref{cor:2}. 
\end{proof}

\section{The weak separation condition for $\IFS-$attractors}\label{sec:wsc}

Theorem~\ref{teo:all} allows us to introduce the so-called weak open set condition for $\IFS-$attractors.

\begin{definition}\label{def:wosc}
We shall understand that $\F$ is under the weak separation condition ($\WOSC$) if any of the equivalent statements provided in Theorem~\ref{teo:all} holds.
\end{definition}



Unlike the $\OSC$, the $\WOSC$ does not depend on any external open set (c.f.~ Definition~\ref{def:wosc}).  
It is also worth pointing out that the $\WOSC$ becomes a separation property for $\IFS-$attractors weaker than the $\OSC$. Additionally, it allows the calculation of their similarity dimensions. Next, we shall justify these facts. 

\begin{corollary}[$\EIFS$]\label{cor:OSC->WOSC}
$\OSC\equiv\SOSC\Rightarrow \ef$ irreducible.
\end{corollary}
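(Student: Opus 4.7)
The plan is to chain together two results that have already been established in the paper, rather than working from the definition of the OSC directly. The equivalence $\OSC\equiv\SOSC$ in the Euclidean setting is exactly the content of Theorem~\ref{teo:schief}~$(\EIFS)$ (Schief's theorem), so the only nontrivial content of the corollary is the implication $\OSC\Rightarrow \ef \text{ irreducible}$.

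First I would invoke Theorem~\ref{teo:osc->3=4=H}, which guarantees that the OSC forces the full chain of equalities
\[\dih(\K)=\cuatro(\K)=\tres(\K)=\alpha.\]
In particular, this gives the equality $\cuatro(\K)=\tres(\K)=\alpha$, which is precisely condition~(\ref{item:2}) in Theorem~\ref{teo:all}.

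Next I would apply the equivalence $(\ref{item:1})\Leftrightarrow (\ref{item:2})$ from Theorem~\ref{teo:all} (which packages Theorem~\ref{teo:irreducible<-->4=3}) to conclude that the natural fractal structure $\ef$ on $\K$ is irreducible, finishing the proof.

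There is no real obstacle here: the work has been done upstream, and the corollary is genuinely a bookkeeping consequence of Theorem~\ref{teo:osc->3=4=H} combined with Theorem~\ref{teo:all}. The only thing to be careful about is to cite the Euclidean label ($\EIFS$) correctly for Theorem~\ref{teo:osc->3=4=H}, since the classical Moran argument underlying that theorem does require the ambient space to be $\R^d$; this is exactly why Corollary~\ref{cor:OSC->WOSC} is stated in the $\EIFS$ setting rather than the general $\IFS$ one.
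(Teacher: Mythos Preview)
Your proposal is correct and follows essentially the same route as the paper's own proof: invoke Theorem~\ref{teo:osc->3=4=H} to obtain $\cuatro(\K)=\tres(\K)=\alpha$ from the $\OSC$, then apply Theorem~\ref{teo:irreducible<-->4=3} (which you access via Theorem~\ref{teo:all}) to conclude that $\ef$ is irreducible. The paper additionally cites Moran's Theorem explicitly as a first step, but this is already subsumed in Theorem~\ref{teo:osc->3=4=H}, so your streamlined version is equally valid.
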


\begin{proof}
By Moran's Theorem, $\OSC$ implies $\dih(\K)=\alpha$. Hence, $\ef$ is irreducible just by applying both Theorems~\ref{teo:osc->3=4=H} and \ref{teo:irreducible<-->4=3}.
\end{proof}
However, 
Corollary~\ref{cor:OSC->WOSC} cannot be inverted, in general.
\begin{remark}[$\EIFS$]\label{obs:oscno->wosc}
$\ef$ irreducible $\centernot{\Rightarrow}\OSC$.
\end{remark}

\begin{proof}
Consider Mattila's Counterexample. In fact, the $\IFS-$attractor provided therein has zero $\mathcal{H}_{\rm H}^1$ measure. Thus, it does not satisfy the $\OSC$. In addition, since the Hausdorff dimension of such an $\IFS-$attractor equals its similarity dimension, then Corollary~\ref{cor:csirreducible} leads to $\ef$ irreducible. 
\end{proof}
The next result stands in the general case as a consequence of both Corollary~\ref{cor:OSC->WOSC} and Remark~\ref{obs:oscno->wosc}. It states that the $\WOSC$ becomes necessary to fulfill the $\SOSC$ in the general case. 
\begin{corollary}[$\IFS$]\label{cor:74}
$\SOSC\Rightarrow \WOSC$ and the reciprocal is not true, in general. 
\end{corollary}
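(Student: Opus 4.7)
The plan is to split the corollary into its two parts and chain together results already proved in the paper.

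For the implication $\SOSC \Rightarrow \WOSC$ in the general (complete metric space) setting, I would first apply Theorem~\ref{teo:schief}(IFS), which tells us that $\SOSC\Rightarrow \h(\K)=\alpha$. Combining this with the unconditional identity $\tres(\K)=\alpha$ of Theorem~\ref{teo:OSC-H=3}, I obtain $\dih(\K)=\tres(\K)=\alpha$. Then Corollary~\ref{cor:csirreducible} gives that $\ef$ is irreducible, which by Definition~\ref{def:wosc} (or equivalently via Theorem~\ref{teo:all}) is precisely the $\WOSC$. So this direction is essentially a three-step chain using already established results.

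For the assertion that the reciprocal fails in general, I would argue that the Euclidean setting is a particular instance of IFSs on complete metric spaces, and produce the counterexample there. Concretely, I would invoke Mattila's Counterexample: one takes almost any $1$-dimensional Lebesgue projection $\K'$ of the nonconnected Sierpi\'nski gasket described there. By Marstrand's Projection Theorem, $\K'$ is an Euclidean self-similar set with $\h(\K')=\alpha=1$, while $\mathcal{H}_{\rm H}^1(\K')=0$. Applying Corollary~\ref{cor:csirreducible} to $\K'$ gives $\ef$ irreducible, hence $\WOSC$. On the other hand, by Theorem~\ref{teo:schief}(EIFS) the vanishing Hausdorff measure forces $\K'$ to fail the $\OSC$, and since $\OSC\Leftrightarrow \SOSC$ in the Euclidean case, $\K'$ cannot satisfy the $\SOSC$ either. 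This exhibits an IFS on a complete metric space (namely $\R$) under $\WOSC$ but not under $\SOSC$.

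There is no substantial obstacle here: once Theorem~\ref{teo:all} is in place the forward implication is almost formal, and the failure of the reciprocal is witnessed by the same Euclidean example already used in Remark~\ref{obs:oscno->wosc}. The only subtle point to keep in mind is to be explicit that an Euclidean counterexample is admissible as a counterexample in the broader IFS setting, which amounts to viewing $\R^d$ as a complete metric space and invoking the Euclidean results of Schief only to rule out $\OSC$ (and hence $\SOSC$) on the projection.
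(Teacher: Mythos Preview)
Your proposal is correct and follows essentially the same route as the paper. For the forward implication the paper simply points to Corollary~\ref{cor:OSC->WOSC} (whose proof is the chain $\SOSC\Rightarrow \h(\K)=\alpha\Rightarrow \ef$ irreducible), and for the failure of the converse it points to Remark~\ref{obs:oscno->wosc}, i.e.\ Mattila's Counterexample; you spell out the same argument, invoking Theorem~\ref{teo:schief}~($\IFS$) and Corollary~\ref{cor:csirreducible} explicitly, which is arguably cleaner since Corollary~\ref{cor:OSC->WOSC} is stated only in the $\EIFS$ setting.
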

Thus, the $\WOSC$ remains necessary for the $\SOSC$ in the general case.
In short, as well as the $\SOSC$ is sufficient to achieve the equality $\h(\K)=\alpha$ in the context of complete metric spaces, the following Moran's type theorem holds for both fractal dimensions III and IV provided that $\F$ is under the $\WOSC$.
\begin{theorem}[$\IFS$]\label{teo:wosc<->4=3=s}
$\WOSC\Leftrightarrow \cuatro(\K)=\tres(\K)=\alpha$.
\end{theorem}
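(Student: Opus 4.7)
The plan is essentially to observe that this theorem is a direct unpacking of the definition of the $\WOSC$ combined with the omnibus equivalence Theorem~\ref{teo:all}. Indeed, Definition~\ref{def:wosc} declares that $\F$ satisfies $\WOSC$ precisely when any (equivalently, all) of the six statements in Theorem~\ref{teo:all} hold; and one of those statements, namely item~(\ref{item:2}), is verbatim the condition $\cuatro(\K)=\tres(\K)=\alpha$. So the proof will consist of citing Definition~\ref{def:wosc} and Theorem~\ref{teo:all} and concluding the two implications in a single sentence each.

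More concretely, for the forward direction I would argue as follows: assume $\WOSC$; by Definition~\ref{def:wosc} this means in particular that the equivalent condition (\ref{item:2}) of Theorem~\ref{teo:all} holds, which is exactly $\cuatro(\K)=\tres(\K)=\alpha$. For the reverse direction, assume $\cuatro(\K)=\tres(\K)=\alpha$; this is condition (\ref{item:2}) of Theorem~\ref{teo:all}, hence all six equivalent statements hold, and in particular $\F$ satisfies $\WOSC$ by Definition~\ref{def:wosc}. No further computation is required.

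Since there is no genuine obstacle here (all the work has been done in Theorem~\ref{teo:all}, whose proof amounts to stringing together Theorems~\ref{teo:irreducible<-->4=3}, \ref{prop:WOSC<-->irr}, \ref{prop: irr<-->nuevo}, Corollary~\ref{cor:irr<-->tiling}, Proposition~\ref{prop:irr->H4>0}, and Corollary~\ref{cor:2}), the only thing to be careful about in the write-up is interpreting the theorem as the announced Moran's type statement: it says that among the many characterizations of the weak open set condition, the one phrased in terms of the coincidence of the similarity dimension with the fractal dimensions III and IV is the relevant Moran-type formulation, generalizing in the $\IFS$ setting the role played by $\SOSC\Rightarrow \h(\K)=\alpha$ on complete metric spaces. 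Thus the proof is essentially a one-line appeal, and the real content lies in having already established Theorem~\ref{teo:all}.
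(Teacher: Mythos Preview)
Your proposal is correct and matches the paper's treatment: the paper states Theorem~\ref{teo:wosc<->4=3=s} without proof precisely because it is an immediate restatement of Definition~\ref{def:wosc}, which declares $\WOSC$ to mean any of the equivalent conditions in Theorem~\ref{teo:all}, condition~(\ref{item:2}) being exactly $\cuatro(\K)=\tres(\K)=\alpha$.
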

It is worth noting that Theorem~\ref{teo:wosc<->4=3=s} does not guarantee the identity $\h(\K)=\alpha$, unlike Schief's Theorem~\ref{teo:schief} ($\IFS$ case). In return, it provides an equivalence between the $\WOSC$, 
a separation property for $\IFS-$attractors weaker than the $\OSC$, and the identity $\cuatro(\K)=\tres(\K)=\alpha$. 
Both results allow the calculation of the self-similarity dimension of $\K$. 

\section{Sufficient conditions for irreducible fractal structures}
Along this section, we shall explore several properties on a fractal structure leading to the $\WOSC$. 
To tackle with, recall that the order of a fractal structure $\ef$ is defined as $\ord(\ef)=\sup\{\ord(\Gamma_n):n\in \N\}$, where $\ord(\Gamma_n)$ is the order of level $n$ as a covering, i.e., $\ord(\Gamma_n)=\sup\{\ord(x,\Gamma_n):x\in X\}$, and $\ord(x,\Gamma_n)=\cd(\{A\in \Gamma_n:x\in A\})-1$ (c.f.~\cite{asg03a}).




The first result we provide in that direction deals with the general case of IFS-attractors on complete metric spaces.

\begin{prop}[$\IFS$]\label{prop:k-cond}
Let us consider the following list of statemens:
\begin{enumerate}[(i)]
\item There exists $k \in \N$ such that each element in $\Gamma_n$ intersects at most to $k$ other elements in that level, for each $n \in \N$.\label{item:12}
\item $\ord(\ef)<\infty$.\label{item:13}
\item $\ef$ irreducible.\label{item:14}
\end{enumerate} 
Then $(\ref{item:12})\Rightarrow (\ref{item:13})\Rightarrow (\ref{item:14})$.
\end{prop}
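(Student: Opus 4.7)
The plan is to prove the two implications separately, as they are of quite different character.

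For $(\text{i}) \Rightarrow (\text{ii})$ I would argue directly from the definitions. Fix $n$ and any $x \in \K$ that belongs to some $A \in \Gamma_n$; then any other $B \in \Gamma_n$ containing $x$ satisfies $A \cap B \ni x$, so by hypothesis~(i) there are at most $k$ such $B$'s. Hence $\ord(x,\Gamma_n) \leq k$ uniformly in $n$ and $x$, yielding $\ord(\ef) \leq k < \infty$. This is the easy half.

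The interesting implication is $(\text{ii}) \Rightarrow (\text{iii})$, which I would prove by contradiction. Assume $\ord(\ef) = N < \infty$ but $\ef$ is not irreducible, so some $\mathbf{i} \in \Sigma^n$ satisfies
\[ f_{\mathbf{i}}(\K) \subseteq \bigcup_{\mathbf{j} \in \Sigma^n \setminus \{\mathbf{i}\}} f_{\mathbf{j}}(\K). \]
The key idea is to concentrate at the unique fixed point $y_0$ of the contraction $f_{\mathbf{i}}$: it lies in $f_{\mathbf{i}}(\K) \subseteq \K$ (by $f_{\mathbf{i}}$-invariance of the attractor), and by the displayed inclusion it admits an alternative decomposition $y_0 = f_{\mathbf{j}_1}(z_1)$ with $\mathbf{j}_1 \in \Sigma^n \setminus \{\mathbf{i}\}$ and $z_1 \in \K$. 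Using $f_{\mathbf{i}}^{l}(y_0) = y_0$ for every $l \geq 0$ I rewrite
\[ y_0 \;=\; f_{\mathbf{i}^l}(f_{\mathbf{j}_1}(z_1)) \;=\; f_{\mathbf{i}^l \mathbf{j}_1}(z_1), \]
and then expand $z_1$ along the iterated Hutchinson equation (pick $\mathbf{u}_l \in \Sigma^{(M-l-1)n}$ with $z_1 \in f_{\mathbf{u}_l}(\K)$) to produce, for each $l = 0,1,\ldots,M-1$, a word $\mathbf{k}_l = \mathbf{i}^l \mathbf{j}_1 \mathbf{u}_l \in \Sigma^{Mn}$ with $y_0 \in f_{\mathbf{k}_l}(\K) \in \Gamma_{Mn}$. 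Adjoining $\mathbf{k}_M := \mathbf{i}^M$, which also labels a piece of $\Gamma_{Mn}$ containing $y_0$, the $M+1$ words $\mathbf{k}_0,\ldots,\mathbf{k}_M$ are pairwise distinct in $\Sigma^{Mn}$: they are distinguished by the position of the first $n$-block equal to $\mathbf{j}_1 \neq \mathbf{i}$. Under the paper's convention that levels of $\ef$ are indexed families admitting repetitions, this forces $\ord(y_0,\Gamma_{Mn}) \geq M$, and choosing $M > N$ contradicts $\ord(\ef) = N$.

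The main obstacle — really the only nontrivial step — is spotting this fixed-point trick, which converts a single instance of redundancy at level $n$ into unboundedly many addresses at levels $Mn$. The rest (verifying that the chosen words are genuinely distinct in $\Sigma^{Mn}$ and that the paper's multiplicity convention makes them count separately in $\ord(y_0,\Gamma_{Mn})$) is routine bookkeeping.
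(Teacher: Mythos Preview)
Your proof is correct. For $(\mathrm{i})\Rightarrow(\mathrm{ii})$ you and the paper agree (the paper simply calls it clear). For $(\mathrm{ii})\Rightarrow(\mathrm{iii})$, however, you take a genuinely different route. The paper does not use the fixed point of $f_{\mathbf{i}}$: instead it first picks a point $x$ and a level $n$ at which the maximum order $k=\ord(\ef)$ is actually attained (this is possible because the order takes integer values bounded above), and then observes that $f_{\mathbf{i}}(x)$ lies in $k$ pieces of $\Gamma_{n+m}$ indexed by words with prefix $\mathbf{i}$ (namely the $f_{\mathbf{i}}$-images of the $k$ pieces of $\Gamma_n$ containing $x$) together with at least one further piece indexed by a word \emph{not} beginning with $\mathbf{i}$, supplied by the redundancy hypothesis. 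This yields $k+1$ pieces through $f_{\mathbf{i}}(x)$ at level $n+m$, an immediate contradiction. Your fixed-point construction is more explicit and self-contained: it manufactures, from scratch, $M+1$ distinct addresses of a single concrete point at level $Mn$, and so it sidesteps the preliminary step of locating where the supremum $\ord(\ef)$ is achieved. The paper's argument is a clean one-step increment from the maximum; yours trades that for an iteration that needs no information about where the maximum lives.
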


\begin{proof}
The implication 
$(\ref{item:12})\Rightarrow (\ref{item:13})$ is clear. Thus, we shall be focused on how to tackle with $(\ref{item:13})\Rightarrow (\ref{item:14})$.
Let us assume, by the contrary, that $\ord(\ef)<\infty$ with $\ef$ not being irreducible. Then there exist $m \in \N$ and $\mathbf{i} \in \Sigma^m$ such that $A_{\mathbf{i}} \subseteq \cup_{\mathbf{j}\in \Sigma^m,\, \mathbf{j}\not=\mathbf{i}} A_{\mathbf{j}}$. Since $\ord(\ef)$ is finite, then there exist $k \in \N$, $x \in \K$, and $n \in \N$, such that $x$ belongs exactly to $k$ elements of $\Gamma_n$, and any other point in $\K$ belongs at most to $k$ elements of $\Gamma_o$, for any $o \in \N$. Hence, $f_{\mathbf{i}}(x)$ belongs to $k$ elements of $\Gamma_{n+m}$ of the form $A_{\mathbf{j}}$ with $\mathbf{j} \in \Sigma^{n+m}$ and $\mathbf{i} \sqsubseteq \mathbf{j}$, and since $f_{\mathbf{i}}(x) \in A_{\mathbf{i}} \subseteq \cup_{\mathbf{j} \in \Sigma^m,\, \mathbf{j} \not=\mathbf{i}} A_{\mathbf{j}}$, then it holds that $f_{\mathbf{i}}(x)$ belongs to some $A_{\mathbf{j}}$ with $\mathbf{j} \in \Sigma^{n+m}$ and $\mathbf{i} \not\sqsubseteq \mathbf{j}$. Thus, $f_{\mathbf{i}}(x)$ belongs at least to $k+1$ elements of $\Gamma_{n+m}$, a contradiction.
\end{proof}


Similarly to Proposition~\ref{prop:k-cond}, the following result stands in the Euclidean case. 

\begin{prop}[$\EIFS$]\label{prop:k-cond2}
Consider the following list of statemens:
\begin{enumerate}[(i)]
\item $\OSC$.\label{item:02}
\item $\ord(\ef)<\infty$.\label{item:03}
\item $\ef$ irreducible.\label{item:04}
\end{enumerate} 
Then 
$(\ref{item:02})\Rightarrow (\ref{item:03})\Rightarrow (\ref{item:04})$.
\end{prop}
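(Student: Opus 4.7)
The plan is to split the statement into its two implications. For $(\ref{item:03})\Rightarrow (\ref{item:04})$ there is nothing new to prove: this is precisely the second implication in Proposition~\ref{prop:k-cond}, which was established in the general $\IFS$ setting and therefore applies verbatim when $X=\R^d$. All of the substantive work lies in the first implication $(\ref{item:02})\Rightarrow (\ref{item:03})$, which is genuinely Euclidean since it rests on the $\OSC$.

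For $(\ref{item:02})\Rightarrow (\ref{item:03})$ I would invoke Schief's finite clustering property, item (ii) among the equivalent descriptions of the $\OSC$ recalled in Section~\ref{sec:4}: under the $\OSC$ there exists $N\in \N$ such that for every $\eps>0$ and every piece $f_{\mathbf{i}_0}(\K)$ of diameter $\eps$, the $\eps$-neighborhood of $f_{\mathbf{i}_0}(\K)$ meets at most $N$ pairwise incomparable pieces of diameter $\geq \eps$. I would exploit this to bound $\ord(x,\Gamma_n)$ uniformly in both $x\in \K$ and $n\in \N$.

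Concretely, I fix $n\in \N$ and $x\in \K$, and set $\mathcal{I}=\{\mathbf{i}\in \Sigma^n: x\in f_{\mathbf{i}}(\K)\}$, so that $\ord(x,\Gamma_n)\leq \cd(\mathcal{I})-1$. By Remark~\ref{obs:incomp} the multi-indices in $\mathcal{I}$ are pairwise incomparable. I pick $\mathbf{i}_0\in \mathcal{I}$ realising the minimum diameter $\eps=c_{\mathbf{i}_0}\cdot \dm(\K)$ over the pieces $\{f_{\mathbf{i}}(\K):\mathbf{i}\in \mathcal{I}\}$. Each $\mathbf{i}\in \mathcal{I}$ then satisfies $\dm(f_{\mathbf{i}}(\K))\geq \eps$, and since $x\in f_{\mathbf{i}_0}(\K)\cap f_{\mathbf{i}}(\K)$, the piece $f_{\mathbf{i}}(\K)$ meets the $\eps$-neighborhood of $f_{\mathbf{i}_0}(\K)$. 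The finite clustering property then bounds $\cd(\mathcal{I})$ by a constant depending only on $\F$, so $\ord(\ef)<\infty$.

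The main obstacle is that at a fixed level $n$ the diameters of the pieces $f_{\mathbf{i}}(\K)$ may range over the whole interval $[c_{\min}^n\cdot\dm(\K),\, c_{\max}^n\cdot \dm(\K)]$, so Schief's finite clustering cannot be applied to $\Gamma_n$ as a uniform-scale family. The correct move is to localise first at the point $x$ and only then choose $\eps$ to be the minimum diameter among the incomparable pieces containing $x$; this is exactly the configuration in which Schief's estimate applies, and the rest is routine bookkeeping.
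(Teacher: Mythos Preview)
Your proposal is correct and follows essentially the same route as the paper's proof. For $(\ref{item:03})\Rightarrow (\ref{item:04})$ both you and the paper invoke Proposition~\ref{prop:k-cond}; for $(\ref{item:02})\Rightarrow (\ref{item:03})$ both arguments fix $x$ and $n$, select the piece of \emph{smallest} diameter among those in $\Gamma_n$ containing $x$, and then apply Schief's finite clustering property (together with Remark~\ref{obs:incomp}) to bound the number of remaining pieces through $x$ by the universal constant~$N$. The paper phrases this last step as a contradiction (assume more than $N+1$ pieces contain $x$), whereas you give the direct bound, but the content is identical.
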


\begin{proof}
The implication $(\ref{item:03})\Rightarrow (\ref{item:04})$ has been proved in Proposition~\ref{prop:k-cond}. 
To deal with $(\ref{item:02})\Rightarrow (\ref{item:03})$, observe that the $\OSC$ implies that there exists an integer $N$ such that $A$ intersects to $\leq N$ incomparable pieces $A_{\mathbf{j}}\in \cup_{k\in \N} \Gamma_{k}$ with $\dm(A)\leq \dm(A_{\mathbf{j}})$, for all $A\in \Gamma_n$ and all $n\in \N$. Let $x$ be a point, $n\in \N$, and assume that $x$ belongs to $>N+1$ pieces of $\Gamma_n$. Let $A$ be the lowest diameter piece of $\Gamma_n$ containing $x$. Thus, $A$ intersects to $>N$ elements of $\Gamma_n$ that are incomparable since all of them lie in the same level of $\ef$, a contradiction.   
\end{proof}
The result above states that irreducible fractal structures becomes necessary to fulfill the $\OSC$ in the Euclidean case.

\section{Regarding the size of the overlaps}\label{sec:size}

In a previous work due to Bandt and Rao (c.f.~\cite{Bandt2007}), they proved the following result regarding the size of the overlaps.

\begin{theorem}[c.f.~\cite{Bandt2007}, Theorem 2]\label{teo:bandt-rao}
Let $\K$ be a connected self-similar set in $\R^2$. If $\K_i\cap \K_j$ is a finite set for $i\neq j$, then the $\OSC$ holds.
\end{theorem}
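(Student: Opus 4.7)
The plan is to reduce the OSC to Schief's finite clustering property, which is equivalent to the OSC in the Euclidean case by Theorem~\ref{teo:schief}. It therefore suffices to produce an integer $N$ such that, for every $\eps>0$ and every piece $\K_{\mathbf{i}}$ of diameter $\eps$, at most $N$ incomparable pieces of size $\geq \eps$ meet the $\eps$-neighborhood of $\K_{\mathbf{i}}$, with $N$ independent of $\mathbf{i}$ and $\eps$.

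First, I would propagate the hypothesis by self-similarity. Since $V_1=\bigcup_{i\neq j}\K_i\cap\K_j$ is a finite set, iterating Hutchinson's equation shows that for any pair of incomparable words $\mathbf{i},\mathbf{j}\in\Sigma^n$ the intersection $\K_{\mathbf{i}}\cap\K_{\mathbf{j}}$ is also finite, and the level-$n$ contact set $V_n=\bigcup_{\mathbf{i}\in\Sigma^n}f_{\mathbf{i}}(V_1)$ is a finite union of rescaled copies of $V_1$. This lets me introduce, at each level $n$, the adjacency graph $G_n$ on $\Sigma^n$ with an edge between $\mathbf{i}$ and $\mathbf{j}$ exactly when $\K_{\mathbf{i}}\cap\K_{\mathbf{j}}\neq\emptyset$; connectedness of $\K$ forces each $G_n$ to be connected.

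Second, I would exploit the planarity of $\R^2$ to obtain a uniform local degree bound on these graphs. Because the pieces $\K_{\mathbf{j}}$ of level $n$ tile $\K$ modulo their finite contact sets, around any $x\in\K$ only finitely many pieces of level $n$ can meet, and the key claim is that the number of such pieces is bounded uniformly in $n$. Informally, if arbitrarily many pieces accumulated around a single point, the self-similar blow-up combined with planarity of $\R^2$ would force an embedded graph whose local complexity contradicts the Jordan curve theorem applied to the contact configuration. Such a uniform local bound converts directly into Schief's finite clustering property, whence Theorem~\ref{teo:schief} delivers the OSC.

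The main obstacle is the planar degree bound in the second step. The hypothesis $\K\subset \R^2$ is essential---higher-dimensional analogues are known to fail---so the argument must genuinely use planar topology, not merely finiteness of overlaps and connectedness. I would expect a rigorous proof to show that the contact points disconnect each piece $\K_{\mathbf{i}}$ into finitely many arcwise components whose self-similar closures obey a rigid planar combinatorial structure, thereby ruling out unbounded local accumulation of pieces. Once this bound is in hand, the remaining steps are essentially bookkeeping, and the OSC follows by invoking the equivalences in Theorem~\ref{teo:schief}.
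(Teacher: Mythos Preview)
The paper does not prove this theorem; it is quoted verbatim from \cite[Theorem~2]{Bandt2007} and used only as a black box to derive Corollary~\ref{cor:bandt-rao}. There is therefore no ``paper's own proof'' to compare against.

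Evaluating your sketch on its own merits: the overall strategy is sound and is indeed the one Bandt and Rao follow. You correctly reduce the problem to a uniform bound on the number of level-$n$ pieces meeting at a point (equivalently, Schief's finite clustering property), and you correctly identify that the planar hypothesis is indispensable. However, your proposal has a genuine gap precisely where the real work lies. You write that ``the main obstacle is the planar degree bound'' and that you ``would expect a rigorous proof to show'' a certain combinatorial rigidity, but you do not supply that argument. The informal appeal to the Jordan curve theorem and to ``an embedded graph whose local complexity contradicts'' planarity is not a proof; in the actual Bandt--Rao argument this step requires a careful analysis of how connected pieces in the plane can share only finitely many points, together with a topological lemma showing that a connected planar continuum cannot be surrounded by arbitrarily many pairwise intersecting connected continua all touching it. Without making that planar-topology step precise, your outline remains a plausible roadmap rather than a proof.
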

Proposition~\ref{prop:k-cond2} yields the following consequence from Bandt-Rao's theorem.
\begin{corollary}[$\EIFS$]\label{cor:bandt-rao}
Let $\K$ be a connected self-similar set in $\R^2$ endowed with its natural fractal structure. If $\K_i\cap \K_j$ is a finite set for $i\neq j$, then the $\WOSC$ holds.
\end{corollary}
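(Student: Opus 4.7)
The proof I envision is essentially a two-step chain that glues Bandt--Rao's Theorem to the machinery developed earlier in the paper. The hypothesis of Corollary~\ref{cor:bandt-rao} is precisely the hypothesis of Theorem~\ref{teo:bandt-rao}: $\K$ is a connected self-similar set in $\R^2$ whose first-level pieces $\K_i$ and $\K_j$ meet in a finite set whenever $i \neq j$. So the first step is simply to invoke Theorem~\ref{teo:bandt-rao} to conclude that $\F$ satisfies the $\OSC$.

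Once the $\OSC$ is at our disposal, the second step is to transfer it to the $\WOSC$. The most direct route is to apply Corollary~\ref{cor:OSC->WOSC}, which states that in the Euclidean setting $\OSC\equiv\SOSC$ implies that the natural fractal structure $\ef$ on $\K$ is irreducible. By Definition~\ref{def:wosc}, $\F$ satisfies the $\WOSC$ precisely when any one (equivalently, all) of the statements listed in Theorem~\ref{teo:all} holds, and $\ef$ being irreducible is item (i) of that list. Hence $\F$ is under the $\WOSC$, as required.

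Alternatively, one could cite Proposition~\ref{prop:k-cond2}: the $\OSC$ implies $\ord(\ef)<\infty$, which in turn implies $\ef$ irreducible, and again Definition~\ref{def:wosc} concludes the proof. Either path is essentially a one-line deduction once Bandt--Rao's Theorem has been applied.

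There is no genuine obstacle here: the hypothesis is tailor-made for Theorem~\ref{teo:bandt-rao}, and the remainder is a bookkeeping step that simply quotes results already established in the paper. The only thing to be careful about is emphasizing that we are in the Euclidean (in fact, planar) setting, so that the $\EIFS$-labeled results (Theorem~\ref{teo:bandt-rao} and Corollary~\ref{cor:OSC->WOSC}) are indeed applicable; the $\IFS$-labeled Theorem~\ref{teo:all} then provides the final equivalence between irreducibility and the $\WOSC$.
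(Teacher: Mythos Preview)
Your proposal is correct and matches the paper's approach: the paper states the corollary as an immediate consequence of Bandt--Rao's Theorem~\ref{teo:bandt-rao} together with Proposition~\ref{prop:k-cond2} (the alternative route you mention), while your primary route via Corollary~\ref{cor:OSC->WOSC} is an equally valid one-line transfer from $\OSC$ to $\WOSC$.
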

Thus, similarly to the problem appeared in Section 1 of \cite{Bandt2007} regarding the $\OSC$, next we pose the analogous concerning the $\WOSC$:
\begin{problem*}
Will the $\WOSC$ be true if all the overlaps are finite sets?
\end{problem*}

In this occasion, though, an affirmative result can be proved in the general case.

\begin{theorem}[$\IFS$]\label{cor:fsize}
If $A\cap B$ is a finite set for all $A,B\in \Gamma_n$ with $A \not=B$ and each $n\in \N$, then the $\LSP$ stands. Hence, the $\WOSC$ holds. 
\end{theorem}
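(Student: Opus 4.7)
The plan is to verify the two conditions $\LSPuno$ and $\LSPdos$ defining the $\LSP$; once this is done, the $\WOSC$ follows immediately from Definition~\ref{def:wosc} and the equivalences collected in Theorem~\ref{teo:all}. Both conditions will rest on a single preliminary observation: the attractor $\K$ has no isolated points (we tacitly exclude the trivial case $|\K|=1$). Indeed, if $x\in \K$ were isolated then $\{x\}$ would be open in $\K$, and since the natural fractal structure $\ef$ is starbase (c.f.~\cite[Theorem 4.7]{Arenas2011}), there would exist $m\in \N$ with $\St(x,\Gamma_m)\subseteq\{x\}$. Picking $\mathbf{j}\in \Sigma^m$ with $x\in f_{\mathbf{j}}(\K)$ would then force $f_{\mathbf{j}}(\K)=\{x\}$, and hence $|\K|=|f_{\mathbf{j}}(\K)|=1$, contradicting the non-triviality of the IFS. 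A posteriori, $\K$ is a nonempty compact perfect metric space, hence uncountable.

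To establish $\LSPdos$, I would argue by contradiction: suppose $\accentset{\circ}{A}=\emptyset$ in $\K$ for some $A=f_{\mathbf{i}}(\K)\in \Gamma_n$. Then every point of $A$ lies in $\overline{\K\setminus A}$, and since
\[
\K\setminus A\;\subseteq\;\bigcup_{B\in \Gamma_n,\,B\neq A} B,
\]
a finite union of closed sets, we obtain $A\subseteq \bigcup_{B\neq A} B$. Consequently $A=\bigcup_{B\neq A}(A\cap B)$ is a finite union of finite sets under the hypothesis of the theorem, so $A$ is finite. But $f_{\mathbf{i}}$ is an injective similarity, so $|A|=|\K|$ is infinite by the previous paragraph, the required contradiction.

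For $\LSPuno$, let $A,B\in \Gamma_n$ with $A\neq B$. The intersection $\accentset{\circ}{A}\cap \accentset{\circ}{B}$ is open in $\K$ and contained in $A\cap B$, which is finite by hypothesis; hence it is a finite open subset of $\K$. If it were nonempty, any one of its points would be an isolated point of $\K$, contradicting the preliminary observation. Thus $\accentset{\circ}{A}\cap \accentset{\circ}{B}=\emptyset$, completing the verification of the $\LSP$, and the $\WOSC$ then follows.

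The only subtle point is the starbase step, which is what promotes ``$\K$ has no isolated points'' from a hypothesis to a free consequence of the setup; handling the degenerate singleton attractor separately takes only a line. Beyond this, I do not foresee any genuine technical obstacle: the argument reduces to routine bookkeeping with finite unions of closed sets and the cardinality of the attractor.
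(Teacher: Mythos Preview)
Your proof is correct and follows essentially the same route as the paper's: both reduce $\LSPuno$ to the fact that a finite subset of $\K$ has empty interior, and both obtain $\LSPdos$ by observing that $A$ minus the finite set $\bigcup_{B\neq A}(A\cap B)$ is open and nonempty in $\K$. The one substantive addition is that you make explicit, via the starbase property, the lemma that $\K$ has no isolated points unless $|\K|=1$; the paper's argument tacitly relies on this when asserting that $A\setminus F$ is nonempty and that $(A\cap B)^{\circ}=\emptyset$.
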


\begin{proof}
Observe that $\K\setminus\cup\{B:A\neq B\}=A\setminus\cup\{B:A\neq B\}=A\setminus F$ with $F$ being a finite set. Thus, $\K\setminus\cup\{B:A\neq B\}$ is a nonempty open set (on $\K$) contained in $A$, so $\LSPdos$ holds. On the other hand, 
since $A^{\circ}\cap B^{\circ}\subseteq (A\cap B)^{\circ}$ and $A\cap B$ is finite for all $A,B\in \Gamma_n$, then $A^{\circ}\cap B^{\circ}=\emptyset$. Hence, $\LSPuno$ stands.    
\end{proof}

\begin{corollary}[$\IFS$]\label{cor:fsize}
If $A\cap B$ is a finite set for all $A,B\in \Gamma_n$ with $A \not=B$ and all $n\in \N$, then the $\WOSC$ holds.
\end{corollary}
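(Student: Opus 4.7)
The plan is to observe that this corollary is an immediate consequence of the preceding theorem (stated right above it), combined with the master equivalence theorem \ref{teo:all}. Indeed, the preceding result already asserts that under the hypothesis ``$A \cap B$ finite for all distinct $A,B \in \Gamma_n$ and all $n \in \N$'' one obtains the $\LSP$. So the only work left is to pass from $\LSP$ to $\WOSC$, and this has already been done: by Definition~\ref{def:wosc}, the $\WOSC$ is defined to hold whenever any of the equivalent statements in Theorem~\ref{teo:all} holds, and the $\LSP$ appears there as item (\ref{item:3}).

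Thus the proof I would write is essentially a one-line deduction: first invoke the previous theorem to obtain the $\LSP$ from the finite-overlap hypothesis, and then invoke Theorem~\ref{teo:all}~(\ref{item:3}) (or, equivalently, Theorem~\ref{prop:WOSC<-->irr} together with Definition~\ref{def:wosc}) to conclude the $\WOSC$.

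There is no substantive obstacle here since both implications have already been established earlier in the paper; the corollary is a ``packaging'' statement that isolates the $\WOSC$ conclusion of the preceding theorem for emphasis and for later reference (for instance in combination with Corollary~\ref{cor:bandt-rao}). If anything, the only cosmetic issue worth noting is that the hypothesis is strictly the same as in the preceding theorem, so one could simply write ``By the previous theorem, the $\LSP$ holds, and hence the $\WOSC$ holds by Theorem~\ref{teo:all}.''
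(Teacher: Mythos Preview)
Your proposal is correct and matches the paper's approach exactly: the paper gives no separate proof for this corollary, since the preceding theorem already concludes ``Hence, the $\WOSC$ holds,'' and the corollary merely isolates that conclusion. Your observation that one passes from the $\LSP$ to the $\WOSC$ via Theorem~\ref{teo:all}~(\ref{item:3}) (equivalently, Definition~\ref{def:wosc}) is precisely the implicit step.
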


It is worth pointing out that Theorem~\ref{cor:fsize} stands in the general case of IFS-attractors on a complete metric space, unlike Corollary~\ref{cor:bandt-rao}, which only stands in the Euclidean plane. In addition, Theorem~\ref{cor:fsize} does not require the self-similar set to be connected.

\section{Conclusion}\label{sec:conclusion}

In this section, we summarize all the results contributed along this paper.


\begin{theorem}

Consider the following statements:
\begin{enumerate}[(i)]
\item $\mathcal{H}_{\rm H}^{\alpha}(\K)>0$.\label{item:a0}
\item $\SOSC$.\label{item:a1}
\item $\OSC$.\label{item:a2}
\item $\h(\K)=\cuatro(\K)=\tres(\K)=\alpha$.\label{item:a3}
\item $\cuatro(\K)=\tres(\K)=\alpha$.\label{item:a4}
\item $\ef$ irreducible.\label{item:a5}
\item $\ef$ tiling.\label{item:a6}
\item $\mathcal{H}_4^{\alpha}(\K)>0$.\label{item:a7}
\end{enumerate}
The next chains of implications and equivalences hold in each case:
\begin{enumerate}
\item [$(\EIFS)$] $(\ref{item:a0})\Leftrightarrow(\ref{item:a1})\Leftrightarrow(\ref{item:a2})\Rightarrow
(\ref{item:a3})\Rightarrow (\ref{item:a4})\Leftrightarrow (\ref{item:a5})
\Leftrightarrow (\ref{item:a6})
\Leftrightarrow (\ref{item:a7})$.
\item [$(\IFS)$] $(\ref{item:a0})\Rightarrow(\ref{item:a1})\Rightarrow
(\ref{item:a3})\Rightarrow (\ref{item:a4})\Leftrightarrow (\ref{item:a5})
\Leftrightarrow (\ref{item:a6})
\Leftrightarrow (\ref{item:a7})$.
\end{enumerate}
\end{theorem}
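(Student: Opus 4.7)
My plan is to assemble the theorem as a bookkeeping statement, since every arrow in both chains has already been proved earlier in the paper; the argument will essentially be a carefully ordered collection of citations with one small composition step. I would organize the proof around three blocks: the tail common to both cases, the $\EIFS$ head, and the $\IFS$ head, dispatching them in this order.

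The substantive core is the common tail $(\ref{item:a4})\Leftrightarrow(\ref{item:a5})\Leftrightarrow(\ref{item:a6})\Leftrightarrow(\ref{item:a7})$. The equivalences $(\ref{item:a4})\Leftrightarrow(\ref{item:a5})$ and $(\ref{item:a5})\Leftrightarrow(\ref{item:a7})$ are already packaged in Theorem~\ref{teo:all}, while $(\ref{item:a5})\Leftrightarrow(\ref{item:a6})$ is Corollary~\ref{cor:irr<-->tiling} (also recorded in Theorem~\ref{teo:all}), so I would simply cite these two results. The implication $(\ref{item:a3})\Rightarrow(\ref{item:a4})$, which is common to both chains, is trivial, since it only amounts to forgetting the equality $\h(\K)=\alpha$ from a longer chain of equalities.

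For the $\EIFS$ head I would invoke the $\EIFS$ branch of Theorem~\ref{teo:schief} to deliver $(\ref{item:a0})\Leftrightarrow(\ref{item:a1})\Leftrightarrow(\ref{item:a2})$, and then Theorem~\ref{teo:osc->3=4=H} to obtain $(\ref{item:a2})\Rightarrow(\ref{item:a3})$ in one shot, since that theorem reads exactly $\OSC\Rightarrow \dih(\K)=\cuatro(\K)=\tres(\K)=\alpha$.

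For the $\IFS$ head, the implication $(\ref{item:a0})\Rightarrow(\ref{item:a1})$ is the $\IFS$ branch of Theorem~\ref{teo:schief}. The step $(\ref{item:a1})\Rightarrow(\ref{item:a3})$ is the only one that requires composing two facts, and I expect it to be the step most prone to a careless slip rather than a genuine obstacle: on the one hand, the $\IFS$ branch of Theorem~\ref{teo:schief} gives $\SOSC\Rightarrow \h(\K)=\alpha$; on the other, Corollary~\ref{cor:74} yields $\SOSC\Rightarrow \WOSC$, which by Theorem~\ref{teo:wosc<->4=3=s} is equivalent to $\cuatro(\K)=\tres(\K)=\alpha$. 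Concatenating these two equalities produces $(\ref{item:a3})$. The main thing to watch here is to draw both conclusions from $\SOSC$ independently, avoiding any implicit appeal to $\h(\K)=\cuatro(\K)$, which is precisely what we are trying to establish.
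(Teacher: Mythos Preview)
Your proposal is correct and matches the paper's intent: this theorem is stated in the conclusion as a summary without an explicit proof, so the argument is indeed pure bookkeeping, and your citations are all accurate. One small simplification for the $\IFS$ step $(\ref{item:a1})\Rightarrow(\ref{item:a3})$: once Theorem~\ref{teo:schief} gives $\SOSC\Rightarrow\h(\K)=\alpha$, you can invoke Corollary~\ref{cor:2} directly ($\h(\K)\leq\cuatro(\K)\leq\tres(\K)=\alpha$) to force all four quantities equal, avoiding the detour through Corollary~\ref{cor:74} and Theorem~\ref{teo:wosc<->4=3=s}.
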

In addition, the next theorem highlights the connections among separation conditions for $\IFS-$attractors.

\begin{theorem}\label{teo:resumen2}
Consider the following list of statements:
\begin{enumerate}[(i)]
\item $\SOSC$.\label{item:a11}
\item $\OSC$.\label{item:a12}
\item $\WOSC$.\label{item:a14}
\end{enumerate}
The next chains of implications and equivalences hold and are best possible:
\begin{enumerate}
\item [$(\EIFS)$] $(\ref{item:a11})\Leftrightarrow(\ref{item:a12})\Rightarrow (\ref{item:a14})$.
\item [$(\IFS)$] $(\ref{item:a11})\Rightarrow (\ref{item:a12})$ and $(\ref{item:a11})\Rightarrow (\ref{item:a14})$.
\end{enumerate}
\end{theorem}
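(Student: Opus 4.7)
The plan is a bookkeeping exercise: every positive implication in the statement has already been established earlier in the paper, so the proof reduces to assembling the correct references and then invoking the paper's two standing counterexamples---Schief's Example 3.1 cited in Section~\ref{sec:4} and Mattila's Counterexample---to rule out the forbidden converses.

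First, for the positive implications in the $\EIFS$ case, the equivalence $(\ref{item:a11})\Leftrightarrow(\ref{item:a12})$ is Schief's equivalence already recorded in Theorem~\ref{teo:schief}$(\EIFS)$, and $(\ref{item:a12})\Rightarrow(\ref{item:a14})$ is precisely Corollary~\ref{cor:OSC->WOSC} read through Definition~\ref{def:wosc}. In the $\IFS$ case, $(\ref{item:a11})\Rightarrow(\ref{item:a12})$ is immediate from the definition of the $\SOSC$, which consists of the $\OSC$ together with the requirement $\K\cap\mathcal{V}\neq\emptyset$, and $(\ref{item:a11})\Rightarrow(\ref{item:a14})$ is the content of Corollary~\ref{cor:74}.

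Second, for the ``best possible'' part I need a witness ruling out each converse that is not claimed. In the $\EIFS$ case the only missing converse is $(\ref{item:a14})\not\Rightarrow(\ref{item:a12})$; this is Remark~\ref{obs:oscno->wosc}, which uses one of the generic Lebesgue projections of the nonconnected Sierpi\'nski gasket of Mattila's Counterexample: that projection is itself a Euclidean $\IFS$-attractor satisfying $\h(\K)=\alpha$, so the $\WOSC$ holds by Corollary~\ref{cor:csirreducible}, while $\mathcal{H}_{\rm H}^{\alpha}(\K)=0$, so the $\OSC$ fails by Schief's equivalence in Theorem~\ref{teo:schief}. In the $\IFS$ case I need to rule out $(\ref{item:a12})\Rightarrow(\ref{item:a11})$ and $(\ref{item:a14})\Rightarrow(\ref{item:a11})$ separately: the first failure is Schief's Example 3.1, an attractor on a complete metric space satisfying the $\OSC$ but not the $\SOSC$; the second is once more Mattila's projection, read now as an example in a complete metric space.

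The only delicate point is to keep track of \emph{which} object plays the role of the attractor in Mattila's Counterexample. The nonconnected Sierpi\'nski gasket itself satisfies the $\SOSC$ and so is useless for our purposes; what we actually use is one of its generic $1$-dimensional projections, which is still a self-similar set, still fulfills $\h(\K)=\alpha$, but has zero $\alpha$-Hausdorff measure. Beyond this, the argument introduces no substantively new reasoning over what Sections~\ref{sec:4} and~\ref{sec:wsc} already supply.
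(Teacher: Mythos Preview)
Your bookkeeping for the positive implications is correct and matches the paper's approach: the theorem is a summary result, and the paper likewise just points back to Theorem~\ref{teo:schief}, Corollary~\ref{cor:OSC->WOSC}, and Corollary~\ref{cor:74}.

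However, your treatment of ``best possible'' in the $\IFS$ case is incomplete. You only rule out $(\ref{item:a12})\Rightarrow(\ref{item:a11})$ and $(\ref{item:a14})\Rightarrow(\ref{item:a11})$, but to show that the two stated implications are genuinely \emph{all} that hold among the three conditions, you must also rule out any direct implication between $(\ref{item:a12})$ and $(\ref{item:a14})$. The failure $(\ref{item:a14})\not\Rightarrow(\ref{item:a12})$ is implicitly covered by your Mattila argument (the projection fails the $\OSC$, not just the $\SOSC$), but you entirely omit $(\ref{item:a12})\not\Rightarrow(\ref{item:a14})$. This is not automatic: in the Euclidean setting $\OSC$ \emph{does} imply $\WOSC$, so a separate non-Euclidean witness is required. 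The paper supplies it in the Remark following Theorem~\ref{teo:resumen2}: Schief's Example~3.1 has $f_1(\K)=f_2(\K)$, so level~$\Gamma_1$ of the natural fractal structure contains a repeated element and is trivially not irreducible; hence the $\WOSC$ fails there even though the $\OSC$ holds. Without this observation your ``best possible'' claim for the $\IFS$ line is unproven.
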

A counterexample due to Schief (c.f.~\cite[Example 3.1]{Schief1996}), guarantees the existence of (non-Euclidean) IFS-attractors under the $\OSC$ that do not satisfy the $\SOSC$.


\begin{remark}
Regarding Theorem~\ref{teo:resumen2}, it is worth noting that the following implications are not true, in general:
\begin{enumerate}
\item [(EIFS)] $(\ref{item:a14})\Rightarrow(\ref{item:a12})$ (c.f.~Mattila's Counterexample).
\item [(IFS)] $(\ref{item:a12})\Rightarrow (\ref{item:a14})$  due to Schief's Counterexample. In fact, the $\WOSC$ does not hold there since $f_1(\K)=f_2(\K)$.
$(\ref{item:a14})\Rightarrow (\ref{item:a12})$ (c.f.~Mattila's Counterexample), $(\ref{item:a14})\Rightarrow (\ref{item:a11})$ (c.f. either Corollary \ref{cor:74} or Mattila's Counterexample), and $(\ref{item:a12})\Rightarrow (\ref{item:a11})$ (c.f.~Schief's Counterexample).
\end{enumerate}
\end{remark}

To conclude this section, we provide two comparative theorems (one for each context, $\EIFS$ or $\IFS$) involving our results and those obtained by Schief.
\begin{theorem}[EIFS, comparative theorem in Euclidean case]\label{teo:comp1}~
\begin{enumerate}
\item [\textbf{Schief's:}] $\mathcal{H}_{\mathrm H}^{\alpha}(\K)>0\Leftrightarrow \OSC\Leftrightarrow \SOSC\Rightarrow \h(\K)=\alpha$.
\item [\textbf{Our's:}] $\WOSC\Leftrightarrow \mathcal{H}_4^{\alpha}(\K)>0\Leftrightarrow \cuatro(\K)=\alpha$.
\end{enumerate}
\end{theorem}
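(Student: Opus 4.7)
The plan is to recognize that Theorem~\ref{teo:comp1} is a purely summarizing statement whose two halves repackage results already established earlier in the paper. So the proof strategy is essentially one of careful bookkeeping, citing the appropriate antecedent theorems and chaining the equivalences together; no new ideas should be needed.

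First I would dispatch Schief's half. The chain $\mathcal{H}_{\rm H}^{\alpha}(\K)>0\Leftrightarrow \OSC\Leftrightarrow \SOSC\Rightarrow \h(\K)=\alpha$ is exactly the EIFS part of Theorem~\ref{teo:schief}, which in turn is Schief's \cite{Schief1994} combined with Moran's classical implication $\OSC\Rightarrow \h(\K)=\alpha$. So this half is immediate by citation.

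For our half, the key bridging fact is Theorem~\ref{teo:OSC-H=3}, which guarantees $\tres(\K)=\alpha$ unconditionally (no separation hypothesis at all). Because of this, the condition $\cuatro(\K)=\alpha$ is literally the same as $\cuatro(\K)=\tres(\K)=\alpha$, and Corollary~\ref{cor:2} even forces $\cuatro(\K)\leq\tres(\K)=\alpha$ always. Using Definition~\ref{def:wosc}, the $\WOSC$ is by definition any (hence all) of the equivalent statements in Theorem~\ref{teo:all}; in particular it is equivalent to item~(\ref{item:2}) there, i.e.\ to $\cuatro(\K)=\tres(\K)=\alpha$, and equivalent to item~(\ref{item:6}) there, i.e.\ to $\mathcal{H}_4^{\alpha}(\K)>0$. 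Combining these two equivalences and applying the observation above that $\tres(\K)=\alpha$ is free, I obtain $\WOSC\Leftrightarrow \mathcal{H}_4^{\alpha}(\K)>0\Leftrightarrow \cuatro(\K)=\alpha$, which is our half.

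There is no real obstacle here: the content of the theorem is entirely contained in prior results. The only thing one has to be careful about is noting that $\cuatro(\K)=\alpha$ can be stated without explicit reference to $\tres(\K)$ precisely because Theorem~\ref{teo:OSC-H=3} gives $\tres(\K)=\alpha$ gratis, so that what a priori looks like a weaker equality is in fact the same statement as item~(\ref{item:2}) of Theorem~\ref{teo:all}. Once this is pointed out, the proof reduces to writing the citations in the right order.
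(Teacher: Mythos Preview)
Your proposal is correct and matches the paper's approach: Theorem~\ref{teo:comp1} is stated in the paper without proof as a summary of earlier results, and your bookkeeping argument (citing Theorem~\ref{teo:schief} for Schief's half and Theorem~\ref{teo:all} together with Theorem~\ref{teo:OSC-H=3} for our half) is exactly the intended justification. Your observation that $\tres(\K)=\alpha$ holds unconditionally, so that $\cuatro(\K)=\alpha$ coincides with item~(\ref{item:2}) of Theorem~\ref{teo:all}, is the only point requiring comment, and you handle it correctly.
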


\begin{theorem}[IFS, comparative theorem for complete metric spaces]\label{teo:comp2}~
\begin{enumerate}
\item [\textbf{Schief's:}] $\mathcal{H}_{\mathrm H}^{\alpha}(\K)>0\Rightarrow \SOSC\Rightarrow \h(\K)=\alpha$.
\item [\textbf{Our's:}] $\WOSC\Leftrightarrow \mathcal{H}_4^{\alpha}(\K)>0\Leftrightarrow \cuatro(\K)=\alpha$.
\end{enumerate}
\end{theorem}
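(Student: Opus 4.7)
The plan is to observe that both parts of Theorem~\ref{teo:comp2} are immediate compilations of results already established earlier in the paper, so no new argument is really needed; the task reduces to careful bookkeeping. The \textbf{Schief's} part is precisely the $\IFS$ statement of Theorem~\ref{teo:schief} and will simply be quoted. The \textbf{Our's} part asserts the triple equivalence $\WOSC \Leftrightarrow \mathcal{H}_4^{\alpha}(\K)>0 \Leftrightarrow \cuatro(\K)=\alpha$, and I will derive it from Theorem~\ref{teo:all} together with Theorem~\ref{teo:OSC-H=3}.

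For the first of the two equivalences, the strategy is to recall that by Definition~\ref{def:wosc}, $\WOSC$ is defined as any of the equivalent statements collected in Theorem~\ref{teo:all}. In particular, item (vi) of that theorem reads $\mathcal{H}_4^{\alpha}(\K)>0$, yielding $\WOSC \Leftrightarrow \mathcal{H}_4^{\alpha}(\K)>0$ immediately. Similarly, item (ii) of Theorem~\ref{teo:all} reads $\cuatro(\K)=\tres(\K)=\alpha$, so we obtain $\WOSC \Leftrightarrow \cuatro(\K)=\tres(\K)=\alpha$.

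For the second equivalence in the Our's part, I would invoke Theorem~\ref{teo:OSC-H=3}, which guarantees that $\tres(\K)=\alpha$ holds unconditionally for every $\IFS$-attractor, with no separation property required whatsoever. Consequently, the joint identity $\cuatro(\K)=\tres(\K)=\alpha$ collapses to the single identity $\cuatro(\K)=\alpha$, which finishes the Our's part. The Schief's part then goes in verbatim from Theorem~\ref{teo:schief}.

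There is essentially no obstacle, since the content of Theorem~\ref{teo:comp2} is genuinely a summary. The only conceptual point worth emphasising in the write-up is the contrast with Schief's chain: because $\tres(\K)=\alpha$ is automatic for any $\IFS$-attractor on a complete metric space, it is precisely the weaker separation property $\WOSC$ that is needed to force the finer Hausdorff-type quantity $\cuatro(\K)$ to agree with the similarity dimension, and equivalently to guarantee that the corresponding $\alpha$-measure $\mathcal{H}_4^{\alpha}(\K)$ is positive, in parallel with Schief's role of the $\SOSC$ relative to $\h(\K)$ and $\mathcal{H}_{\mathrm H}^{\alpha}(\K)$.
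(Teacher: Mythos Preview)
Your proposal is correct and matches the paper's approach: Theorem~\ref{teo:comp2} is stated as a comparative summary without a separate proof, since Schief's chain is quoted verbatim from Theorem~\ref{teo:schief} and the Our's chain is an immediate restatement of items (ii) and (vi) of Theorem~\ref{teo:all} combined with the unconditional identity $\tres(\K)=\alpha$ from Theorem~\ref{teo:OSC-H=3}. Your observation that $\tres(\K)=\alpha$ collapses the joint identity $\cuatro(\K)=\tres(\K)=\alpha$ to $\cuatro(\K)=\alpha$ is exactly the point, and the paper likewise treats this as self-evident bookkeeping (cf.~also Theorem~\ref{teo:wosc<->4=3=s}).
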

Both Theorems~\ref{teo:comp1} and \ref{teo:comp2} are best possible since no other implications are valid, in general. Thus, it holds that in the Euclidean case, the $\SOSC$ becomes sufficient but not necessary (c.f.~Mattila's Counterexample) 
to achieve the equality $\h(\K)=\alpha$. 
On the other hand, we have proved the equivalence among the $\WOSC$ and the conditions $\cuatro(\K)=\alpha$ and $\mathcal{H}_4^{\alpha}(\K)>0$ (that is calculated by finite coverings), as well. 
In the general case, though, the $\SOSC$ is only sufficient to get the equality between $\h(\K)$ and $\alpha$. Our chain of equivalences involving the $\WOSC$ still remains valid for complete metric spaces. We have also proved that the $\SOSC$ is stronger than the $\WOSC$ (once again, the Mattila's Counterexample works), a separation property for $\IFS-$attractors which does not depend on any external open set (unlike the $\SOSC$). Anyway, both statements in Theorem \ref{teo:comp2} (Schief's and our's) can be combined into the following summary result standing in the general case:

\begin{corollary}[IFS]\label{cor:85}
\[\mathcal{H}_{\mathrm H}^{\alpha}(\K)>0\Rightarrow \SOSC\Rightarrow \h(\K)=\alpha\Rightarrow
\WOSC\Leftrightarrow \mathcal{H}_4^{\alpha}(\K)>0\Leftrightarrow \cuatro(\K)=\alpha.
\]
\end{corollary}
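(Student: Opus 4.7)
The plan is to assemble the stated chain by quoting results already established earlier in the paper, with the only new content being the observation that bridges $\h(\K)=\alpha$ to the $\WOSC$. First I would dispatch the two leftmost implications $\mathcal{H}_{\mathrm H}^{\alpha}(\K)>0\Rightarrow \SOSC\Rightarrow \h(\K)=\alpha$ directly from the $\IFS$ part of Theorem~\ref{teo:schief}, which is Schief's theorem for complete metric spaces and needs no reproof here.

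The crucial step is the middle implication $\h(\K)=\alpha\Rightarrow \WOSC$. My plan is to use that Theorem~\ref{teo:OSC-H=3} gives $\tres(\K)=\alpha$ unconditionally under $\IFS$ conditions~\ref{hypo:1}. Hence the hypothesis $\dih(\K)=\alpha$ upgrades for free to $\dih(\K)=\tres(\K)=\alpha$, and Corollary~\ref{cor:csirreducible} then yields that $\ef$ is irreducible. By Definition~\ref{def:wosc} together with the equivalences bundled in Theorem~\ref{teo:all}, irreducibility of $\ef$ is precisely the $\WOSC$, closing this link.

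For the final block of equivalences $\WOSC\Leftrightarrow \mathcal{H}_4^{\alpha}(\K)>0\Leftrightarrow \cuatro(\K)=\alpha$, I would read them off from Theorem~\ref{teo:all}: items (\ref{item:3}) (which is the $\WSP$, i.e.\ the $\WOSC$ by Definition~\ref{def:wosc}) and (\ref{item:6}) give the first equivalence, while item (\ref{item:2}) paired with Theorem~\ref{teo:wosc<->4=3=s} gives the second, after once more absorbing the identity $\tres(\K)=\alpha$ granted by Theorem~\ref{teo:OSC-H=3} so that $\cuatro(\K)=\tres(\K)=\alpha$ collapses to the displayed form $\cuatro(\K)=\alpha$.

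There is no real obstacle here: the entire corollary is a bookkeeping assembly, and every implication or equivalence is either a verbatim citation or an immediate consequence of the automatic identity $\tres(\K)=\alpha$. The only place a reader may pause is the step $\h(\K)=\alpha\Rightarrow \WOSC$, so I would write the proof as one short paragraph that spells out explicitly how $\tres(\K)=\alpha$ is used to activate Corollary~\ref{cor:csirreducible}, and then chains the remaining equivalences through Theorems~\ref{teo:all} and~\ref{teo:wosc<->4=3=s}.
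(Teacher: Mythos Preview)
Your proposal is correct and matches the paper's approach: the paper states Corollary~\ref{cor:85} without proof, presenting it as the combination of the two chains in Theorem~\ref{teo:comp2}, and your write-up supplies precisely the expected details, including the bridge $\h(\K)=\alpha\Rightarrow\WOSC$ via Corollary~\ref{cor:csirreducible} (enabled by the unconditional $\tres(\K)=\alpha$ from Theorem~\ref{teo:OSC-H=3}).
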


Interestingly, Corollary~\ref{cor:85} highlights that the $\WOSC$ becomes necessary to reach the equality between the Hausdorff and the similarity dimensions of IFS-attractors. In other words, if the natural fractal structure which any IFS-attractor can be endowed with is not irreducible, then a Moran's type theorem cannot hold. 

To conclude this paper, we shall pose some natural questions still remaining open.

\begin{open}[$\EIFS/\IFS$]\label{open:1}
Is it true that $\WOSC\Rightarrow \h(\K)=\alpha$?
\end{open}

\begin{open}[$\EIFS/\IFS$]\label{open:2}
Is it true that $\h(\K)=\cuatro(\K)$?
\end{open}
It is worth noting that an affirmative response to Open question~\ref{open:2} would imply that Open question~\ref{open:1} is true.

\smallskip

\end{document}